\newcommand{\norm}[1]{\left\| #1 \right\|}  
\newcommand{\scprd}[1]{\left\langle #1 \right\rangle}  
\renewcommand{\d}{\,\mathrm{d}} 
\newcommand{\e}{\mathrm{e}} 
\newcommand{\N}{\mathbb{N}}  
\newcommand{\R}{\mathbb{R}}
\newcommand{\C}{\mathbb{C}}
\newcommand{\K}{\mathbb{K}}
\newcommand{\eps}{\varepsilon}
\renewcommand{\phi}{\varphi}
\newcommand{\ul}{\underline}
\newcommand{\ol}{\overline}
\renewcommand{\Re}{\operatorname{Re}}
\numberwithin{equation}{section}
\newtheorem{thm}{Theorem}[section]
\newtheorem{cor}[thm]{Corollary}
\newtheorem{lm}[thm]{Lemma}
\theoremstyle{definition} \newtheorem{ex}[thm]{Example}
\theoremstyle{definition}
\title{Stabilization of port-Hamiltonian systems with discontinuous energy densities}
\author{Jochen Schmid\\  
\small Institut f\"ur Mathematik, Universit\"at W\"urzburg, 97074 W\"urzburg, Germany\\
\small jochen.schmid@mathematik.uni-wuerzburg.de}    
\date{}
\begin{document}

\maketitle

\begin{abstract}
\small{ \noindent 
We establish an exponential stabilization result for linear 
port-Hamiltonian systems of first order with quite general, not necessarily continuous, energy densities. In fact, we have only to require the energy density of the system to be of bounded variation. In particular, and in contrast to the previously known stabilization results, 
our result applies to vibrating strings or beams with jumps in their mass density and modulus of elasticity.   
}
\end{abstract}

{ \small \noindent 
Index terms: Stabilization of port-Hamiltonian systems, energy densities of bounded variation, static linear boundary control
}

\section{Introduction}

In this paper, we are concerned with the stabilization of linear first-order port-Hamiltonian systems with discontinuous energy densities  on a bounded interval $[a,b]$. 
%
%
%
Such a system evolves according to a partial differential equation of the form 
\begin{align} \label{eq:pde-intro}
\partial_t x(t,\zeta) = P_1 \partial_{\zeta} \big( \mathcal{H}(\zeta) x(t,\zeta) \big) + P_0 \mathcal{H}(\zeta) x(t,\zeta)
\end{align}
for the states $x(t,\cdot): [a,b] \to \K^m$, and the energy of such a system at time $t$ is given by an integral of the form
\begin{align*} 
E(x(t,\cdot)) = \frac{1}{2} \int_a^b x(t,\zeta)^* \mathcal{H}(\zeta) x(t,\zeta) \d \zeta.
\end{align*}  
In these equations, $\mathcal{H}$ is the 
energy density of the system, that is, a suitable measurable function from $[a,b]$ to $\K^{m\times m}$, and $P_0, P_1$ are 
matrices in $\K^{m\times m}$ with suitable symmetry 
properties.
%
We want to stabilize 
such systems by linear boundary control and therefore we complement~\eqref{eq:pde-intro} by 
the linear boundary condition
\begin{align}
0 = W_{B,1} \begin{pmatrix} \mathcal{H}(b)x(t,b) \\ \mathcal{H}(a)x(t,a) \end{pmatrix}
\end{align}
and the linear boundary input and output
\begin{align}
u(t) = W_{B,2} \begin{pmatrix} \mathcal{H}(b)x(t,b) \\ \mathcal{H}(a)x(t,a) \end{pmatrix}
\qquad \text{and} \qquad
y(t) = W_C \begin{pmatrix} \mathcal{H}(b)x(t,b) \\ \mathcal{H}(a)x(t,a) \end{pmatrix},
\end{align}
where $W_{B,1} \in \K^{(m-k)\times 2m}$ and $W_{B,2}, W_C \in \K^{k \times 2m}$ and $k \in \{ 1, \dots, m\}$. 
%
So, in abstract terms, 
we consider 
a linear evolution equation 
\begin{align} \label{eq:evol-eq-intro}
\dot{x} = \mathcal{A}x = P_1 \partial_{\zeta}(\mathcal{H}x) + P_0 \mathcal{H}x
\end{align} 
in the state space $X := L^2([a,b],\K^m)$ with additional linear boundary input and output conditions
\begin{align} \label{eq:bdry-in/output-cond-intro}
u(t) = \mathcal{B}x(t) \qquad \text{and} \qquad y(t) = \mathcal{C}x(t),
\end{align}
where the linear differential operator $\mathcal{A}: D(\mathcal{A}) \subset X \to X$ is defined by the right-hand side of~\eqref{eq:evol-eq-intro} with domain
\begin{align*}
D(\mathcal{A}) := \bigg\{ x \in X: \mathcal{H}x \in W^{1,2}((a,b),\K^m) \text{ and } W_{B,1} \begin{pmatrix} (\mathcal{H}x)(b)  \\ (\mathcal{H}x)(a) \end{pmatrix} = 0 \bigg\}
\end{align*}
and where the linear boundary in- and output operators $\mathcal{B}, \mathcal{C}: D(\mathcal{A}) \subset X \to \K^k$ are defined by
\begin{align*}
\mathcal{B}x := W_{B,2} \begin{pmatrix} (\mathcal{H}x)(b)  \\ (\mathcal{H}x)(a) \end{pmatrix}
\qquad \text{and} \qquad
\mathcal{C}x := W_C \begin{pmatrix} (\mathcal{H}x)(b)  \\ (\mathcal{H}x)(a) \end{pmatrix}.
\end{align*}
\smallskip

%
%

What we show in this paper is that the input-output system~\eqref{eq:evol-eq-intro}, \eqref{eq:bdry-in/output-cond-intro} can be  exponentially stabilized by means of the negative output-feedback law
\begin{align} \label{eq:output-feedback-intro}
u(t) = -\mu y(t)
\end{align}
with an arbitrary $\mu > 0$, 
that is, the system~\eqref{eq:evol-eq-intro}, \eqref{eq:bdry-in/output-cond-intro} with the additional feedback condition~\eqref{eq:output-feedback-intro} is an exponentially stable linear system. 
%
We achieve 
this exponential stability of the closed-loop system~\eqref{eq:evol-eq-intro}, \eqref{eq:bdry-in/output-cond-intro}, \eqref{eq:output-feedback-intro} 
under the assumption that the energy density $\zeta \mapsto \mathcal{H}(\zeta)$ is of bounded variation and that the open-loop system~\eqref{eq:evol-eq-intro}, \eqref{eq:bdry-in/output-cond-intro} satisfies two additional natural conditions, namely (i) impedance-passivity and (ii) domination of the state value at one of the boundary points ($a$ or $b$) by the input and output. 
%
We apply this stability result to vibrating strings and beams (modelled according to Timoshenko). 
\smallskip

%
%
Since the energy density in our result is only required to be of bounded variation, we can treat strings and beams with jumps in their material characteristics like mass density or modulus of elasticity. 
With the previously known stabilization results, by contrast, such situations 
with jumps in the mass density and the modulus of elasticity cannot be dealt with. 
Indeed, the stability results from~\cite{ViZwLeMa09}, \cite{JaZw}, \cite{Au} 
are restricted to port-Hamiltonian systems with continuously differentiable or Lipschitz continuous energy densities, and the stability result from~\cite{CoZu95} is restricted to vibrating strings with constant modulus of elasticity (while allowing bounded variation regularity for the mass density). 
\smallskip

%
%
In the entire paper, we will use the following notations. As usual, $\mathbb{K}$ stands 
for the field $\R$ of real or the field $\C$ of complex numbers, $\R^+_0 := [0,\infty)$ denotes the set of non-negative reals, and $|\cdot|$ denotes the standard norm on $\K^m$ for any $m \in \N$. 
Also, $L^p(S,\K^m)$ and $W^{k,p}((a,b),\K^m)$ for $p \in [1,\infty) \cup \{\infty\}$ (integrability index) and $k \in \N$ (differentiability index) are the usual Lebesgue and Sobolev spaces, respectively, and $\norm{\cdot}_p$ and $\scprd{\cdot,\cdot \cdot}_2$ will denote the standard norm and scalar product of $L^p(S,\K^m)$ and $L^2(S,\K^m)$, respectively.
$AC([a,b],\K^m)$ denotes the set of absolutely continuous functions from $[a,b]$ to $\K^m$.
And finally, for $J = [a,b]$ or $J = \R$, 
\begin{align*}
BV(J,\K^m) := \big\{ \text{functions } f: J \to \K^m \text{ with } \operatorname{Var}(f) < \infty \big\}
\end{align*} 
denotes the set of functions of bounded variation from $J$ to $\K^m$, where 
\begin{align*}
\operatorname{Var}(f) := \bigg\{ \sum_{l=1}^{L} |f(t_l)-f(t_{l-1})|: (t_l)_{l \in \{0,\dots,L\} } \text{ a partition of } [a,b] \bigg\}
\end{align*}
in the case $J = [a,b]$ and where $\operatorname{Var}(f) := \sup \{ \operatorname{Var}(f|_{[a,b]}): a < b \}$ in the case $J = \R$.

\section{Some technical preliminaries}

In this section, we record 
some technical preliminaries about measurable representations of  functions with values in function spaces. 
In essence, the following lemma can be found in~\cite{HillePhillips} (Section~3.4, paragraph about spaces of class $L$), but the importance of choosing the right representatives -- which is demonstrated by our  example below -- is ignored there. 
%
We recall from~\cite{AmEs} (Chapter~X.1) that a function $f: S \to X$ between a measurable set $S \in \mathcal{L}_{\R^d}$ (Lebesgue $\sigma$-algebra on $\R^d$) and a Banach space $X$ is called \emph{$\lambda$-measurable} iff there is a sequence of integrable simple functions $f_n: S \to X$  converging to $f$ $\lambda$-almost everywhere, where $\lambda$ is the Lebesgue measure on $\mathcal{L}_{\R^d}$. In case $X$ is separable, 
$\lambda$-measurability coincides with (plain) $\mathcal{L}_{\R^d}$-$\mathcal{B}_{X}$-measurability by Pettis' theorem 
($\mathcal{B}_X$ being the Borel $\sigma$-algebra of $X$).
A function $f: S \to X$ as above will be called \emph{$p$-integrable} for a $p \in [1,\infty)$ iff it is $\lambda$-measurable and 
\begin{align*}
\int_S \norm{f(s)}_X^p \d s := \int_S \norm{f(s)}_X^p \d \lambda(s) < \infty.
\end{align*}


\begin{lm} \label{lm:mb representation of abstract functions}
Suppose $f: J \to X$ is a $p$-integrable function from a bounded  interval $J \subset \R$ to the space $X := L^p(Z,\K^m)$, where $p \in [1,\infty)$ and $Z \in \mathcal{L}_{\R}$. Then 
\begin{itemize}
\item[(i)] for every $s \in J$ there is a representative $\ul{f}(s): Z \to \K^m$ of $f(s)$ such that the 
function 
\begin{align} \label{eq:mb representation, concrete function}
J\times Z \ni (s,\zeta) \mapsto \ul{f}(s)(\zeta) \in \K^m
\end{align} 
is measurable and
\item[(ii)] for every choice of representatives $\ul{f}(s)$ as in~(i) 
the function $J \ni s \mapsto \ul{f}(s)(\zeta)$ is integrable for a.e.~$\zeta \in Z$ and 
\begin{align}  \label{eq:mb representation, integral of concrete function}
\zeta \mapsto \int_J \ul{f}(s)(\zeta) \d s 
\end{align}
is a representative of the element $\int_J f(s) \d s \in X$.
\end{itemize} 
\end{lm}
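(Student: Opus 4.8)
The plan is to reduce both assertions to the case of simple functions, where they are immediate, and then to pass to the limit along the approximating sequence. Since $J$ is bounded and $f$ is $p$-integrable, Hölder's inequality gives $\int_J \norm{f(s)}_X \d s < \infty$, so $f$ is Bochner integrable and $\int_J f(s)\d s \in X$ is well defined. By the definition of $\lambda$-measurability there are integrable simple functions converging to $f$ $\lambda$-a.e., and by a standard truncation one may arrange simple functions $f_n \to f$ $\lambda$-a.e.\ with $\norm{f_n(s)}_X \le 2\norm{f(s)}_X$ for all $s$; then $\norm{f_n(s)-f(s)}_X \le 3\norm{f(s)}_X$, and since the right-hand side lies in $L^p(J)\subset L^1(J)$, dominated convergence yields $\int_J \norm{f_n(s)-f(s)}_X \d s \to 0$ as well as $\int_J \norm{f_n(s)-f(s)}_X^p \d s \to 0$. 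Writing each $f_n = \sum_j \mathds{1}_{A_j} x_j$, I fix measurable representatives $\ul{x_j}$ of the $x_j$ and set $g_n(s,\zeta) := \sum_j \mathds{1}_{A_j}(s)\, \ul{x_j}(\zeta)$; this $g_n$ is jointly measurable on $J\times Z$, and $g_n(s,\cdot)$ represents $f_n(s)$ for every $s$.

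For part~(i) I would pass to a subsequence $(f_{n_k})_k$ with $\int_J \norm{f_{n_k}(s)-f(s)}_X \d s \le 2^{-k}$, so that Tonelli's theorem makes $\sum_k \norm{f_{n_k}(s)-f(s)}_X$ finite for a.e.\ $s \in J$. Fixing such an $s$ together with some representative $r_s$ of $f(s)$, the identity $\norm{g_{n_k}(s,\cdot)-r_s}_{L^p(Z)} = \norm{f_{n_k}(s)-f(s)}_X$ shows that the partial sums of $\sum_k |g_{n_k}(s,\cdot)-r_s|$ are bounded in $L^p(Z)$; monotone convergence then forces $\sum_k |g_{n_k}(s,\zeta)-r_s(\zeta)| < \infty$, hence $g_{n_k}(s,\zeta)\to r_s(\zeta)$, for a.e.\ $\zeta \in Z$. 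I therefore define $\ul{f}(s)(\zeta) := \lim_k g_{n_k}(s,\zeta)$ on the jointly measurable set where this limit exists, and let $\ul{f}(s)$ be an arbitrary representative of $f(s)$ for the remaining $\lambda$-null set of exceptional $s$. As this only alters $g := \lim_k g_{n_k}$ on a $\lambda^2$-null subset of $J\times Z$, completeness of Lebesgue measure keeps $(s,\zeta)\mapsto \ul{f}(s)(\zeta)$ jointly measurable, while the previous step shows that $\ul{f}(s)$ represents $f(s)$ for \emph{every} $s$, proving~(i).

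For part~(ii) I first observe that any two jointly measurable representatives as in~(i) agree $\lambda^2$-a.e.\ on $J\times Z$: their difference vanishes on a.e.\ horizontal slice, so by Tonelli it vanishes $\lambda^2$-a.e., and slicing in the other direction then shows that for a.e.\ $\zeta$ the two functions $s\mapsto \ul{f}(s)(\zeta)$ coincide for a.e.\ $s$. It therefore suffices to establish~(ii) for the representative $g$ built above. Tonelli applied to $|g|^p$ gives $\int_{J\times Z} |g(s,\zeta)|^p \d(s,\zeta) = \int_J \norm{f(s)}_X^p \d s < \infty$, so $g \in L^p(J\times Z)$; slicing in $\zeta$ shows that $s\mapsto g(s,\zeta)$ lies in $L^p(J)\subset L^1(J)$, and is in particular integrable, for a.e.\ $\zeta$. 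Setting $F(\zeta) := \int_J g(s,\zeta)\d s$ and $F_k(\zeta) := \int_J g_{n_k}(s,\zeta)\d s$, the function $F_k$ represents the finite combination $\int_J f_{n_k}(s)\d s$, and Minkowski's integral inequality gives $\norm{F_k - F}_{L^p(Z)} \le \int_J \norm{f_{n_k}(s)-f(s)}_X \d s \to 0$. Since also $\int_J f_{n_k}(s)\d s \to \int_J f(s)\d s$ in $X$, the limit $F$ represents $\int_J f(s)\d s$, which is the second claim in~(ii).

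The step I expect to be most delicate is the passage in~(i) from $L^p(Z)$-norm convergence to pointwise convergence in $\zeta$: the full sequence $g_n(s,\cdot)$ need not converge pointwise, which is what forces the subsequence-plus-monotone-convergence argument, and one must pin down a representative for \emph{every} $s$ while preserving joint measurability. A second point requiring care is that $Z$ may have infinite measure, so $f(s)\in L^p(Z)$ gives no control of the $L^1(Z)$-norm; this is why the integrability in~(ii) is drawn from $g\in L^p(J\times Z)$ via Tonelli applied to $|g|^p$ rather than to $|g|$.
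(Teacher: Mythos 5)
Your proof is correct, and it shares the paper's overall blueprint --- approximate $f$ by simple functions with the truncation bound $\norm{f_n(s)}_X \le 2\norm{f(s)}_X$, assemble jointly measurable representatives from representatives of their finitely many values, pass to a limit, and patch the exceptional $\lambda$-null set of parameters $s$ with arbitrary representatives, joint measurability surviving by completeness of Lebesgue measure --- but it deviates in execution at two points. In (i), where you extract a rapidly convergent subsequence and obtain a pointwise a.e.\ limit by monotone convergence, the paper instead notes that the jointly measurable simple representatives form a Cauchy sequence in $L^p(J\times Z)$, takes the limit $\ul{\phi}$ there by completeness, and identifies $[\ul{\phi}(s,\cdot)] = f(s)$ for a.e.\ $s$; these are interchangeable Riesz--Fischer-type devices. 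The more substantive difference is in (ii): the paper proves the statement directly for an \emph{arbitrary} jointly measurable choice of representatives, using a H\"older/Tonelli bound (with the factor $\lambda(J)^{p/q}$) for the integrability claim and then identifying $\zeta \mapsto \int_J \ul{f}(s)(\zeta)\d s$ with a representative of $\int_J f(s)\d s$ by extracting two a.e.-convergent subsequences; you instead first prove that any two choices as in (i) agree $\lambda^2$-a.e.\ on $J\times Z$ (hence have equal slice integrals for a.e.\ $\zeta$), thereby reducing (ii) to the single representative built in (i), and then conclude via Minkowski's integral inequality, identifying the limit directly in $L^p(Z)$ with no subsequence extraction. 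Your reduction step is a clean observation that the paper does not isolate, and it both explains why the ``for every choice'' clause holds and shortens the limit passage; the paper's direct route, in exchange, delivers the integrability assertion for every choice without the additional appeal to completeness of one-dimensional Lebesgue measure that your transfer step needs (to see that $s \mapsto \ul{f}(s)(\zeta)$ is measurable because it agrees a.e.\ with $s \mapsto g(s,\zeta)$). One small point to tighten in (i): for non-exceptional $s$ you should also assign a definite value (say $0$) to $\ul{f}(s)(\zeta)$ on the $\zeta$-null set where $\lim_k g_{n_k}(s,\zeta)$ fails to exist, so that $\ul{f}(s)$ is defined on all of $Z$.
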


\begin{proof}
We strictly distinguish between functions and equivalence classes of functions in this proof and, as usual, we use square brackets to denote equivalence classes.
\smallskip

(i) Since $f: J \to X$ is $p$-integrable, there exist integrable simple functions $f_n: J \to X$ such that
$f_n(s) \longrightarrow f(s)$ for a.e.~$s \in J$ and we can also assume that $$\norm{f_n(s)}_X \le 2 \norm{f(s)}_X$$ for all $s \in J$ and $n \in \N$ (if this bound does not hold for the initial choice of simple functions $f_n^0$, just multiply them by the characteristic function of the (measurable!) set $\{s \in J: \norm{f_n^0(s)}_X \le 2 \norm{f(s)}_X \}$). So, by the theorem of dominated convergence,   
\begin{align} \label{eq:mb representation, 1}
\int_J \norm{f_n(s)-f(s)}_X^p \d s \longrightarrow 0 \qquad (n \to \infty).
\end{align}
Since the $f_n$ are $\lambda$-measurable simple functions, they are of the form
\begin{align} \label{eq:mb representation, canonical form of f-n} 
f_n(s) = \sum_{k=1}^{m_n} \alpha_{n k} \chi_{E_{n k}}(s) \qquad (s \in J)
\end{align}
for certain $\alpha_{n k} \in X = L^p(Z,\K^m)$ and $E_{n k} \in \mathcal{L}_{\R}$. Choosing representatives $\ul{\alpha}_{n k}: Z \to \K^m$ of $\alpha_{n k}$ and defining $\ul{\phi}_n: J \times Z \to \K^m$ by 
\begin{align*}
\ul{\phi}_n(s,\zeta) := \sum_{k=1}^{m_n} \ul{\alpha}_{n k}(\zeta) \chi_{E_{n k}}(s) \qquad ((s,\zeta) \in J \times Z),
\end{align*}
we see that $\ul{\phi}_n$ is measurable for every $n \in \N$ so that by Tonelli's theorem (Theorem~X.6.7 of~\cite{AmEs}) and~\eqref{eq:mb representation, 1} we have
\begin{align*}
\int_{J \times Z} |\ul{\phi}_n(s,\zeta) - \ul{\phi}_m(s,\zeta)|^p \d (s,\zeta) = \int_J \norm{f_n(s)-f_m(s)}_X^p \d s \longrightarrow 0 \qquad (m, n \to \infty).
\end{align*}
So, by the completeness of $L^p(J\times Z, \K^m)$, there is a $p$-integrable function $\ul{\phi}: J \times Z \to \K^m$ such that
\begin{align} \label{eq:mb representation, 2}
\int_{J \times Z} |\ul{\phi}_n(s,\zeta) - \ul{\phi}(s,\zeta)|^p \d (s,\zeta) \longrightarrow 0 \qquad (n \to \infty).
\end{align}
We have by Tonelli's theorem that $[\ul{\phi}(s,\cdot)]$, $[\ul{\phi}_n(s,\cdot) - \ul{\phi}(s,\cdot)]$ belong to $L^p(Z,\K^m)$ for a.a.~$s \in J$ (with exceptional sets $N_0$ and $N_n$ respectively) and that 
$$J\setminus N' \ni s \mapsto \norm{[\ul{\phi}(s,\cdot)]}_X^p, \big\| [\ul{\phi}_n(s,\cdot) - \ul{\phi}(s,\cdot)] \big\|_X^p $$
are measurable for all $n \in \N$, where $N' := \bigcup_{n=0}^{\infty} N_n$.  In view of~\eqref{eq:mb representation, 2} it now follows that
\begin{align} \label{eq:mb representation, 3}
\int_J \norm{f_n(s) - [\ul{\phi}(s,\cdot)]}_X^p \d s = \int_J \int_Z |\ul{\phi}_n(s,\zeta) - \ul{\phi}(s,\zeta)|^p \d \zeta \d s \longrightarrow 0 \qquad (n \to \infty).
\end{align} 
Combining~\eqref{eq:mb representation, 1} and~\eqref{eq:mb representation, 3} we see that 
\begin{align} \label{eq:mb representation, 4}
f(s) = [\ul{\phi}(s,\cdot)]
\end{align}
for a.a.~$s \in J$ (with an exceptional set denoted by $N''$). We now define $\ul{f}(s): Z \to \K^m$ by
\begin{align*}
\ul{f}(s)(\zeta) 
:=
\begin{cases}
\ul{\phi}(s,\zeta), \quad (s,\zeta) \in (J \setminus N'') \times Z \\
\ul{f}_0(s)(\zeta), \quad (s,\zeta) \in N'' \times Z
\end{cases} 
\end{align*}
where $\ul{f}_0(s)$ for $s \in N''$ is an arbitrary representative of $f(s)$. It then follows by~\eqref{eq:mb representation, 4}  that $\ul{f}(s)$ is a representative  of $f(s)$ for every $s \in J$ and by the measurability of $\ul{\phi}$ and $\lambda(N''\times Z) = 0$ 
it follows that $J \times Z \ni (s,\zeta) \mapsto \ul{f}(s)(\zeta)$ is measurable, as desired.
\smallskip

(ii) Choose and fix for every $s \in J$ a representative $\ul{f}(s)$ of $f(s)$ such that $(s,\zeta) \mapsto \ul{f}(s)(\zeta)$ is measurable 
(which is possible by part~(i)). 
%
It follows by Tonelli's theorem 
that 
$s \mapsto \ul{f}(s)(\zeta)$ 
is measurable for a.e.~$\zeta \in Z$ and that $\zeta \mapsto \int_{J} |\ul{f}(s)(\zeta)| \d s \in [0,\infty) \cup \{\infty\}$ is measurable as well. Also,  
\begin{align*}
\int_Z \bigg( \int_J |\ul{f}(s)(\zeta)| \d s \bigg)^p \d \zeta 
&\le \int_Z \lambda(J)^{p/q} \int_J |\ul{f}(s)(\zeta)|^p \d s \d \zeta
= \lambda(J)^{p/q} \int_J \norm{f(s)}_X^p \d s \\
&< \infty
\end{align*}
by the boundedness of $J$ and the $p$-integrability of $f$ (where $q \in (1,\infty]$ is the dual exponent of $p \in [1,\infty)$, of course). Consequently, 
\begin{align*}
\int_J |\ul{f}(s)(\zeta)| \d s < \infty
\end{align*}
for a.e.~$\zeta \in Z$ and thus the function $J \ni s \mapsto \ul{f}(s)(\zeta)$ is integrable for a.e.~$\zeta \in Z$. 
%
What we have to show now is that
\begin{align} \label{eq:mb representation, 5}
\zeta \mapsto \int_J \ul{f}(s)(\zeta) \d s
\end{align}
is a representative of $F := \int_J f(s) \d s \in X$ (where the existence of this integral in $X$ follows by means of H\"older's inequality from the $p$-integrability of $f$ and the boundedness of $J$). 
In order to do so, we show that for any 
given representative $\ul{F}$ of $F$ one has 
\begin{align} \label{eq:mb representation, 5-1}
\int_J \ul{f}(s)(\zeta) \d s = \ul{F}(\zeta)
\end{align}
for a.e.~$\zeta \in Z$. 
Choose integrable simple functions $f_n: J \to X$ such that  
\begin{align} \label{eq:mb representation, 6}
\int_J \norm{f_n(s)-f(s)}_X^p \d s \longrightarrow 0 \qquad (n \to \infty)
\end{align}
(see the beginning of the proof of part~(i)) and write $F_n := \int_J f_n(s)\d s$. Also, for every $s \in J$ and $n \in \N$ choose a representative $\ul{f}_n(s)$ of $f_n(s)$ and $\ul{F}_n$ of $F_n$ by choosing representatives of the values $\alpha_{n k} \in X$ of $f_n$, see~\eqref{eq:mb representation, canonical form of f-n}.  
%
Clearly,  
\begin{align} \label{eq:mb representation, 7}
\int_J \ul{f}_n(s)(\zeta) \d \zeta = \ul{F}_n(\zeta)
\end{align}
for a.e.~$\zeta \in Z$ and every $n \in \N$. 
%
In view of~\eqref{eq:mb representation, 6} it further follows that
\begin{align} \label{eq:mb representation, 8}
\int_Z |\ul{F}_n(\zeta) - \ul{F}(\zeta)|^p \d \zeta = \norm{F_n - F}_X^p 
\le \lambda(J)^{p/q} \int_J \norm{f_n(s)-f(s)}_X^p \d s
\longrightarrow 0 
\end{align} 
as $n \to \infty$ and that 
\begin{align} \label{eq:mb representation, 9}
\int_Z \bigg| \int_J \ul{f}_n(s)(\zeta) \d s - \int_J \ul{f}(s)(\zeta) \d s \bigg|^p \d \zeta 
&\le \int_Z \lambda(J)^{p/q} \int_J |\ul{f}_n(s)(\zeta) - \ul{f}(s)(\zeta)|^p \d s \d \zeta \notag \\
&= \lambda(J)^{p/q} \int_J \norm{f_n(s)-f(s)}_X^p \d s
\longrightarrow 0 
\end{align}
as $n \to \infty$. So by~\eqref{eq:mb representation, 8} and~\eqref{eq:mb representation, 9} there is a subsequence $(n_k)$ such that
\begin{align} \label{eq:mb representation, 10}
\ul{F}_{n_k}(\zeta) \longrightarrow \ul{F}(\zeta)  \qquad (k \to \infty)
\end{align}
for a.e.~$\zeta \in Z$ and such that
\begin{align} \label{eq:mb representation, 11}
\int_J \ul{f}_{n_k}(s)(\zeta) \d s \longrightarrow \int_J \ul{f}(s)(\zeta) \d s  \qquad (k \to \infty)
\end{align}
for a.e.~$\zeta \in Z$. Combining now~\eqref{eq:mb representation, 7} with \eqref{eq:mb representation, 10} and \eqref{eq:mb representation, 11}, we obtain the desired equality~\eqref{eq:mb representation, 5-1} for almost every $\zeta \in Z$. 
\end{proof}

\begin{cor} \label{cor:mb representation of abstract functions}
Suppose $f: J \to X$ is a continuous function from a compact interval $J \subset \R$ to the space $X := L^p(Z,\K^m)$, where $p \in [1,\infty)$ and $Z \in \mathcal{L}_{\R}$. Then the conclusions of the previous lemma hold true. 
\end{cor}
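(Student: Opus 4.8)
The plan is to reduce Corollary~\ref{cor:mb representation of abstract functions} to Lemma~\ref{lm:mb representation of abstract functions} by showing that every continuous function $f: J \to X$ on a compact interval $J$ is automatically $p$-integrable in the sense defined above, so that the hypotheses of the lemma are met verbatim. Once $p$-integrability is established, conclusions~(i) and~(ii) follow immediately from the lemma, with no further work.

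First I would verify $\lambda$-measurability of $f$. Since $f: J \to X$ is continuous and $J$ is a compact interval, $f$ is in particular Borel measurable as a map into the Banach space $X = L^p(Z,\K^m)$. Moreover, the continuous image $f(J)$ of the compact set $J$ is a compact, hence separable, subset of $X$, so $f$ takes values in a separable subspace of $X$. By Pettis' theorem (invoked in the excerpt right before the lemma), $\lambda$-measurability then coincides with ordinary $\mathcal{L}_{\R}$-$\mathcal{B}_X$-measurability, which $f$ enjoys by continuity. Thus $f$ is $\lambda$-measurable.

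Next I would check the integrability bound. The function $s \mapsto \norm{f(s)}_X$ is continuous on the compact interval $J$, being the composition of the continuous map $f$ with the continuous norm $\norm{\cdot}_X$. Hence it is bounded, say by some constant $M < \infty$, and therefore
\begin{align*}
\int_J \norm{f(s)}_X^p \d s \le M^p \, \lambda(J) < \infty,
\end{align*}
using that $J$ is bounded. This shows $f$ is $p$-integrable in the required sense, so Lemma~\ref{lm:mb representation of abstract functions} applies and its conclusions~(i) and~(ii) carry over unchanged.

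I do not expect any serious obstacle here, since the corollary is essentially a matter of confirming that continuity on a compact interval is a special case of $p$-integrability. The only point demanding a little care is the separable-range argument needed to invoke Pettis' theorem, because the target space $X = L^p(Z,\K^m)$ need not itself be separable when $Z$ is an arbitrary Lebesgue-measurable set; relying instead on the separability of the compact range $f(J)$ sidesteps this issue cleanly.
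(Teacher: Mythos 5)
Your proposal is correct and follows essentially the same route as the paper: continuity plus compactness of $J$ gives separable-valuedness and Borel measurability, Pettis' theorem upgrades this to $\lambda$-measurability, and boundedness of $s \mapsto \norm{f(s)}_X$ on the compact interval yields $p$-integrability, after which the lemma applies directly. Your added care about the possible non-separability of $X = L^p(Z,\K^m)$ itself is a nice explicit touch, but the argument is the paper's.
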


\begin{proof}
Since $f$ is continuous, it is $\mathcal{L}_{\R}$-$\mathcal{B}_X$-measurable and separably valued. So, $f$ is $\lambda$-measurable by Pettis' theorem (Theorem~X.1.4 of~\cite{AmEs}). Since moreover $J$ is compact, $f$ is $p$-integrable and thus the assertion follows by the previous lemma. 
\end{proof}

In view of the previous lemma, the question arises whether (i) for every choice of representatives $\ul{f}(s)$ of $f(s)$, the function~\eqref{eq:mb representation, concrete function} is measurable and whether (ii) for every choice of representatives $\ul{f}(s)$ of $f(s)$ such that $J \ni s \mapsto \ul{f}(s)(\zeta)$ is integrable for a.e.~$\zeta \in Z$, the function~\eqref{eq:mb representation, integral of concrete function} is a representative of the element $\int_J f(s) \d s \in X$. 
As the following example shows, the answers to both questions are  negative. 

\begin{ex} \label{ex:mb representation of abstract functions}
(i) Set $J, Z := [0,1]$ and choose a subset $E$ of $J \times Z$ such that $E$ is not Lebesgue-measurable and such that each line in $\R^2$ intersects $E$ in at most $2$ points. Such a set has been shown to exist by Sierpi\'{n}ski in~\cite{Si20a} using the axiom of choice. 
Also, let $f: J \to X := L^2(Z,\R)$ and $\ul{f}(s): J \to \R$ be defined by
\begin{align}
f(s) := 0 \qquad (s \in J) \qquad \text{and} \qquad \ul{f}(s)(\zeta) := \chi_{E}(s,\zeta) \qquad ((s,\zeta) \in J\times Z).
\end{align}
Since the section $E_s := \{ \zeta \in Z: (s,\zeta) \in E\}$ has at most $2$ elements for every $s \in J$, the function $\ul{f}(s)$ is a representative of $f(s)$ for every $s \in J$ but the function~\eqref{eq:mb representation, concrete function} is not measurable because $E \notin \mathcal{L}_{\R^2}$.
\smallskip

(ii) Set $J, Z := [0,1]$ and choose a subset $E$ of $J \times Z$ such that the section $E_s := \{ \zeta \in Z: (s,\zeta) \in E\}$ is countable for every $s \in J$ and such that the section $E^{\zeta} := \{s \in J: (s,\zeta) \in E\}$ is co-countable for every $\zeta \in Z$. Such a set $E$ was shown to exist by Sierpi\'{n}ski in~\cite{Si20b} assuming that the continuum hypothesis is true (which is not needed for~\cite{Si20a}). (See also Example 8.9~(c) in~\cite{Rudin:real-complex} and Exercise~2.47 and Section~2.8 of~\cite{Folland:real}.)
Also, let $f: J \to X := L^2(Z,\R)$ and $\ul{f}(s): J \to \R$ be defined by
\begin{align}
f(s) := 0 \qquad (s \in J) \qquad \text{and} \qquad \ul{f}(s)(\zeta) := \chi_{E}(s,\zeta) \qquad ((s,\zeta) \in J\times Z).
\end{align}
Since the section $E_s$ is countable for every $s \in J$, the function $\ul{f}(s) = \chi_{E_s}$ is a representative of $f(s)$ for every $s \in J$, and since $E^{\zeta}$ is co-countable, $J \ni s \mapsto \ul{f}(s)(\zeta) = \chi_{E^{\zeta}}(s)$ is integrable for every $\zeta \in Z$ but, 
as
\begin{align}
\int_J \ul{f}(s)(\zeta) \d s = 1 \ne 0 \qquad (\zeta \in Z),
\end{align}
the function~\eqref{eq:mb representation, integral of concrete function} is not a representative of $0 = \int_J f(s) \d s \in X$. 
$\blacktriangleleft$
\end{ex}

\section{Stability results}

In this section, we establish the main stability results of this paper and to do so we need some preparations. 
%
We will call a matrix-valued function $[a,b] \ni \zeta \mapsto \mathcal{H}(\zeta) \in \K^{m\times m}$ on some compact interval $[a,b]$ an \emph{energy density} iff it is measurable, $\mathcal{H}(\zeta)$ is self-adjoint for almost all $\zeta \in [a,b]$, and there are constants $\ul{m}, \ol{m} \in (0,\infty)$ such that 
\begin{align} \label{eq:en-density, lower and upper bound}
\ul{m} \le \mathcal{H}(\zeta) \le \ol{m} 
\end{align}
for almost all $\zeta \in [a,b]$. 
Also, for a given energy density $\mathcal{H}$, a linear operator $A: D(A) \subset X \to X$ is called a \emph{first-order port-Hamiltonian operator with energy density $\mathcal{H}$} iff the domain 
\begin{align*}
D(A) \subset \{ x \in X: \mathcal{H}x \in W^{1,2}((a,b),\K^m) \}
\end{align*}
is a dense subspace of $X := L^2([a,b],\K^m)$ and if $A$
is of the form
\begin{align}
Ax = P_1 \partial_{\zeta} (\mathcal{H}x) + P_0 \mathcal{H}x 
\qquad (x \in D(A))
\end{align} 
for some invertible self-adjoint matrix $P_1 = P_1^* \in \K^{m\times m}$ and some skew-adjoint matrix $P_0 = -P_0^* \in \K^{m\times m}$. 
%
An evolution equation $\dot{x} = Ax$ 
with $A$ being a first-order port-Hamiltonian operator is called a \emph{first-order port-Hamiltonian system}.
Additionally, the scalar product $\scprd{\cdot, \cdot\cdot}_X$  defined by
\begin{align} \label{eq:en-scprd-def}
\scprd{x,y}_X := \frac{1}{2} \int_a^b x(\zeta)^* \mathcal{H}(\zeta) y(\zeta) \d \zeta
\end{align}
is called the \emph{$\mathcal{H}$-energy scalar product} and the corresponding norm $\norm{\cdot}_X$ is called the \emph{$\mathcal{H}$-energy norm}. 
%
In view of~\eqref{eq:en-density, lower and upper bound} it is clear that the $\mathcal{H}$-energy norm is equivalent to the standard norm of $L^2(Z,\K^m)$. 
In view of the continuous embedding of $W^{1,2}((a,b),\K^m)$ in $C([a,b],\K^m)$ it is also clear that for $x \in D(A)$ the vector
\begin{align*}
(\mathcal{H}x)|_{\partial} := \begin{pmatrix} (\mathcal{H}x)(b) \\ (\mathcal{H}x)(a) \end{pmatrix} \in \K^{2m}
\end{align*}
of stacked boundary values is well-defined. 
%
As usual, we do not distinguish here and in the following between $\mathcal{H}$ and the multiplication operator $M_{\mathcal{H}}$ associated with $\mathcal{H}$, that is, we will always write $\mathcal{H}x$ for $M_{\mathcal{H}} x$. Similarly, $\mathcal{H}^{-1}$ will stand for $\zeta \mapsto \mathcal{H}(\zeta)^{-1}$ as well as for the corresponding multiplication operator. 
\smallskip


As a first preparatory lemma, we recall from~\cite{JaMoZw15} (Theorem~1.1) the following characterization of when a port-Hamiltonian operator generates a contraction semigroup.

\begin{lm} \label{lm:char-contr-sgr-gen-property}
Suppose $A: D(A) \subset X \to X$ is a first-order port-Hamiltonian operator with energy density $\mathcal{H}: [a,b] \to\K^{m\times m}$, where $X := L^2([a,b],\K^m)$ is endowed with the $\mathcal{H}$-energy norm $\norm{\cdot}_X$. Suppose further that the domain of $A$ incorporates $m$ linear boundary conditions, that is, it is of the form
\begin{align} \label{eq:domain-with-m-lin-bdry-cond}
D(A) = \big\{ x \in X: \mathcal{H}x \in W^{1,2}((a,b),\K^m) \text{ and } W(\mathcal{H}x)|_{\partial} = 0 \big\}
\end{align}
for some matrix $W \in \K^{m\times 2m}$. Then $A$ generates a contraction semigroup on $X$ if and only if $A$ is dissipative in $X$, that is, 
\begin{align*}
\Re \scprd{x,Ax}_X \le 0 \qquad (x \in D(A)). 
\end{align*}
In that case, the boundary matrix $W$ automatically has full rank $m$. 
\end{lm}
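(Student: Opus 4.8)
The plan is to prove the characterization via the Lumer--Phillips theorem. Recall that a densely defined operator $A$ generates a contraction semigroup on a Hilbert space $X$ if and only if $A$ is dissipative and $\ran(\lambda - A) = X$ for some (equivalently all) $\lambda > 0$. Since dissipativity is exactly the stated inequality $\Re \scprd{x,Ax}_X \le 0$ for all $x \in D(A)$, the ``only if'' direction is immediate from Lumer--Phillips, and the heart of the ``if'' direction is to establish the range condition, i.e.\ surjectivity of $\lambda - A$ for some $\lambda > 0$, given that $A$ is dissipative and has the special port-Hamiltonian form with the $m$ boundary conditions encoded by $W$.

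First I would reduce to a concrete solvability problem. Given $g \in X$, I want $x \in D(A)$ with $(\lambda - A)x = g$, that is, writing $v := \mathcal{H}x \in W^{1,2}$, the first-order linear ODE
\begin{align*}
\lambda \mathcal{H}^{-1} v - P_1 v' - P_0 v = g,
\end{align*}
subject to the boundary constraint $W v|_{\partial} = 0$. Since $P_1$ is invertible, this is a regular linear system $v' = P_1^{-1}(\lambda \mathcal{H}^{-1} - P_0) v - P_1^{-1} g$ with an $L^\infty$ (indeed bounded and measurable) coefficient matrix, so by Carath\'eodory existence theory the solution map is an affine flow: the general $W^{1,2}$ solution is parametrized by its initial value $v(a) \in \K^m$ via a fundamental solution $\Phi_\lambda(\zeta)$, and $v|_{\partial} = (v(b), v(a))$ depends affinely on $v(a)$. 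Plugging this into $W v|_{\partial} = 0$ yields a square linear system in the unknown $v(a) \in \K^m$; solvability for \emph{every} $g$ is then equivalent to invertibility of the associated $m \times m$ matrix $W \binom{\Phi_\lambda(b)}{I}$.

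The key step, and the main obstacle, is to show this boundary matrix is invertible for suitably large $\lambda > 0$, using dissipativity. The clean way is to argue that dissipativity forces $\lambda - A$ to be injective (so that the homogeneous problem has only the trivial solution), which by the affine-flow description above means the square boundary system has trivial kernel and is hence invertible. Injectivity follows from dissipativity: if $Ax = \lambda x$ with $\lambda > 0$, then $\lambda \norm{x}_X^2 = \Re\scprd{x,\lambda x}_X = \Re\scprd{x,Ax}_X \le 0$, forcing $x = 0$. One must be slightly careful that this injectivity of the operator eigenvalue problem translates to nonsingularity of the finite-dimensional boundary matrix; this is where the fundamental-solution parametrization is essential, since it identifies $\ker(\lambda - A)$ with the kernel of that matrix. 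Once surjectivity is in hand for a single $\lambda > 0$, Lumer--Phillips delivers the semigroup.

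Finally, for the full-rank claim on $W$: I would observe that if $W$ had rank strictly less than $m$, then the boundary condition $W v|_\partial = 0$ would impose fewer than $m$ independent constraints, leaving the homogeneous problem $Ax = \lambda x$ with a nontrivial solution space (the fundamental-solution parametrization shows $\ker(\lambda - A)$ has dimension $m - \rk W > 0$), contradicting the injectivity just established. Hence $\rk W = m$ automatically. I expect the genuinely delicate points to be handling the merely-$L^\infty$ regularity of $\mathcal{H}^{-1}$ in the ODE theory (Carath\'eodory rather than classical solutions) and verifying that $v = \mathcal{H}x$ with $v \in W^{1,2}$ indeed returns an $x \in X$ in the domain, but these are technical rather than conceptual.
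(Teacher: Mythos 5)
The first thing to note is that the paper itself does not prove this lemma: it is recalled, without proof, from \cite{JaMoZw15} (Theorem~1.1). So there is no in-paper argument to measure your proposal against; the relevant question is whether your self-contained proof is sound, and it essentially is. The chain you actually use is correct: dissipativity makes $\lambda - A$ injective for $\lambda>0$; since $P_1^{-1}\big(\lambda\mathcal{H}^{-1}-P_0\big)$ is bounded and measurable (the two-sided bound $\ul{m}\le\mathcal{H}\le\ol{m}$ in the definition of an energy density is what makes $\mathcal{H}^{-1}$ bounded), Carath\'eodory theory provides a fundamental matrix $\Phi_\lambda$, and the variation-of-constants parametrization identifies $\ker(\lambda-A)$ with $\ker\big(W\binom{\Phi_\lambda(b)}{I}\big)$; injectivity therefore makes this $m\times m$ matrix invertible, which yields $\ran(\lambda-A)=X$; density of $D(A)$ is built into the paper's definition of a first-order port-Hamiltonian operator, so Lumer--Phillips applies. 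The converse implication and the full-rank statement follow as you say. Your route is also more direct than a common alternative in the literature, namely reducing to the constant-density case $\mathcal{H}=I$ (where the fundamental solution is an explicit matrix exponential) and then transferring m-dissipativity from $J$ to $J\mathcal{H}$ by a multiplicative-perturbation argument; you instead handle the merely measurable $\mathcal{H}$ in one stroke, at the price of invoking Carath\'eodory rather than classical ODE theory, which is indeed unproblematic.

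Two of your formulations are imprecise, though neither affects the logic. First, solvability of the boundary system for every $g$ is \emph{not} equivalent to invertibility of $W\binom{\Phi_\lambda(b)}{I}$: for $W=0$ the boundary condition is vacuous, so $\lambda-A$ is surjective for trivial reasons, yet the matrix is the zero matrix. Only the implication ``matrix invertible $\Rightarrow$ solvable for every $g$'' holds in general, and that is the only one you use (you derive invertibility from injectivity, not from surjectivity). Second, $\dim\ker\big(W\binom{\Phi_\lambda(b)}{I}\big)$ equals $m-\rk\big(W\binom{\Phi_\lambda(b)}{I}\big)\ge m-\rk W$, and equality with $m - \rk W$ may fail; but only the inequality, i.e.\ nontriviality of the kernel when $\rk W<m$, is needed for the contradiction, so the full-rank conclusion stands.
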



As a second preparatory lemma, we show the following differentiability result for certain sideways energy functions along classical solutions of port-Hamiltonian systems with absolutely continuous energy densities. (A classical solution of a such a system~\eqref{eq:xdot=Ax} is a continuously differentiable map $x: J \to X$ on some interval $J \subset \R^+_0$ such that for all $t \in J$ one has $x(t) \in D(A)$ and $\dot{x}(t) = Ax(t)$.) 
In~\cite{JaZw}, \cite{Au} such a differentiability result -- for the special case of continuously differentiable or Lipschitz continuous energy densities --  is used implicitly as well, but no proofs are given there. As we will see, the proof requires quite some work and care. In fact, some of the (formal) computations from~\cite{JaZw}, \cite{Au} will in general become false for careless choices of representatives. See the example below.   

\begin{lm} \label{lm:F db a.e.}
Suppose $A: D(A) \subset X \to X$ is a first-order port-Hamiltonian operator on $X := L^2([a,b],\K^m)$ with energy density $\mathcal{H} \in AC([a,b],\K^{m\times m})$. 
Suppose further that $x: \R^+_0 \to X$ 
is a classical solution of the differential equation 
\begin{align} \label{eq:xdot=Ax}
\dot{x} = Ax
\end{align}
and let $F: [a,b] \to \K$ be the sideways energy defined by 
\begin{align} \label{eq:def von F}
F(\zeta) := \int_{r(\zeta)}^{t(\zeta)} \ul{x}(s)(\zeta)^* \mathcal{H}(\zeta) \ul{x}(s)(\zeta) \d s, 
\end{align} 
where $r,t \in C^1([a,b],\R^+_0)$ are given functions 
and where $\ul{x}(s)$ for every $s \in \R^+_0$ is the continuous representative of $x(s)$. 
It then follows that $F$ is absolutely continuous and hence differentiable almost everywhere with derivative given by
\begin{align} \label{eq:abl von F}
F'(\zeta) &= \ul{x}(s)(\zeta)^* \big( t'(\zeta) \mathcal{H}(\zeta) + P_1^{-1} \big) \ul{x}(s)(\zeta) \Big|_{s=t(\zeta)} 
- \ul{x}(s)(\zeta)^* \big( r'(\zeta) \mathcal{H}(\zeta) + P_1^{-1} \big) \ul{x}(s)(\zeta) \Big|_{s=r(\zeta)} \notag \\
&\quad - \int_{r(\zeta)}^{t(\zeta)}  \ul{x}(s)(\zeta)^* \big(  (P_1^{-1} P_0 \mathcal{H}(\zeta))^{*} + \mathcal{H}'(\zeta) + P_1^{-1} P_0 \mathcal{H}(\zeta)  \big) \ul{x}(s)(\zeta) \d s
\end{align}
for almost every $\zeta \in [a,b]$.
Additionally, for $\mathcal{H} \in C^1([a,b],\K^{m\times m})$ the sideways energy $F$ defined above is even continuously differentiable.
\end{lm}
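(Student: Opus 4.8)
The plan is to reduce the claim to a pointwise computation in $\zeta$ governed by the equation $\dot x = Ax$, and then to integrate in $s$ and invoke the measurable-representation results of the previous section to make the formal manipulations rigorous. Throughout I write $\ul y(s) := \mathcal H \ul x(s)$, which lies in $W^{1,2}((a,b),\K^m)$ because $x(s) \in D(A)$, so that $\ul x(s) = \mathcal H^{-1}\ul y(s)$ with $\mathcal H^{-1} \in AC([a,b],\K^{m\times m})$ (here I use $\mathcal H \in AC$ together with the lower bound $\ul m > 0$). Rewriting the integrand of $F$ via self-adjointness of $\mathcal H^{-1}$ as $\ul y(s)(\zeta)^* \mathcal H(\zeta)^{-1} \ul y(s)(\zeta)$ lets me differentiate in $\zeta$ using only the a.e.\ derivatives of the absolutely continuous factors $\ul y(s)$ and $\mathcal H^{-1}$, thereby sidestepping the nonexistent $\zeta$-derivative of the merely continuous $\ul x(s)$.

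First I would record the joint regularity needed. Since $\dot x(s) = Ax(s) = P_1 \partial_\zeta \ul y(s) + P_0 \ul y(s)$ is continuous into $X$ and $\mathcal H x(s)$ is continuous into $X$, the map $s \mapsto \mathcal H x(s)$ is continuous into $W^{1,2}$, whence by the embedding $W^{1,2} \hookrightarrow C$ the map $(s,\zeta) \mapsto \ul y(s)(\zeta)$, and therefore also $(s,\zeta) \mapsto \ul x(s)(\zeta) = \mathcal H(\zeta)^{-1} \ul y(s)(\zeta)$, is jointly continuous. Next, and this is the step for which Lemma~\ref{lm:mb representation of abstract functions} and Corollary~\ref{cor:mb representation of abstract functions} are indispensable, I apply the corollary to the continuous maps $x, \dot x : \R^+_0 \to X$ to obtain jointly measurable representatives and the identity $\ul x(s)(\zeta) = \ul x(s_0)(\zeta) + \int_{s_0}^{s} \ul{\dot x}(\sigma)(\zeta) \d \sigma$, valid for a.e.\ $\zeta$ and all $s$. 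This shows that for a.e.\ $\zeta$ the map $s \mapsto \ul x(s)(\zeta)$ is absolutely continuous with $\partial_s \ul x(s)(\zeta) = \ul{\dot x}(s)(\zeta) = P_1 (\partial_\zeta \ul y(s))(\zeta) + P_0 \ul y(s)(\zeta)$; it is precisely here that a careless choice of representatives would falsify the computation, exactly as the example following the lemma warns.

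With these preparations, the core is a pointwise identity: for a.e.\ $\zeta$, differentiating $g(s,\zeta) := \ul y(s)(\zeta)^* \mathcal H(\zeta)^{-1} \ul y(s)(\zeta)$ in $\zeta$, substituting $\partial_\zeta \ul y(s) = P_1^{-1} \partial_s \ul x(s) - P_1^{-1} P_0 \ul y(s)$ from the equation, and using $P_1 = P_1^*$, $\mathcal H = \mathcal H^*$ together with $(P_1^{-1} P_0 \mathcal H)^* = \mathcal H P_0^* P_1^{-1}$, I obtain
\begin{align*}
\partial_\zeta g(s,\zeta) = \partial_s\big[ \ul x(s)(\zeta)^* P_1^{-1} \ul x(s)(\zeta) \big] - \ul x(s)(\zeta)^* \big( (P_1^{-1} P_0 \mathcal H(\zeta))^* + \mathcal H'(\zeta) + P_1^{-1} P_0 \mathcal H(\zeta) \big) \ul x(s)(\zeta).
\end{align*}
Integrating this in $s$ over $[r(\zeta), t(\zeta)]$, the total $s$-derivative contributes exactly the $P_1^{-1}$ boundary terms of~\eqref{eq:abl von F}, while the moving integration limits contribute, via the Leibniz rule, the $t'(\zeta)\mathcal H(\zeta)$ and $r'(\zeta)\mathcal H(\zeta)$ boundary terms. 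To turn this formal Leibniz differentiation into a rigorous statement that $F$ is absolutely continuous with the stated a.e.\ derivative, I would estimate the difference quotient $\big( F(\zeta+\delta) - F(\zeta) \big)/\delta$ directly: the moving-limit part is controlled by the joint continuity of $g$ and the Lipschitz continuity of $r, t$, and the integrand part is handled by Fubini together with the domination of $\ul x(s)(\zeta)^* M(\zeta) \ul x(s)(\zeta)$, where $M := (P_1^{-1} P_0 \mathcal H)^* + \mathcal H' + P_1^{-1} P_0 \mathcal H$, by an $s$-independent $L^1$-function of $\zeta$, the only unbounded ingredient being $\mathcal H' \in L^1$. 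I expect this interchange of $\zeta$-differentiation with the $s$-integration, in the presence of merely $L^1$ regularity of $\mathcal H'$ and merely continuous (not differentiable) $\ul x$, to be the main technical obstacle.

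Finally, for the addendum, when $\mathcal H \in C^1$ the matrix $M$ is continuous and, by the joint continuity of $(s,\zeta) \mapsto \ul x(s)(\zeta)$ established above together with the $C^1$-regularity of $r, t$, the entire right-hand side of~\eqref{eq:abl von F} is continuous in $\zeta$. Since an absolutely continuous function whose a.e.\ derivative agrees a.e.\ with a continuous function $G$ satisfies $F(\zeta) = F(a) + \int_a^\zeta G$, it follows that $F \in C^1([a,b],\K)$ with $F' = G$ everywhere, which completes the proof.
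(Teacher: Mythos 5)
Your proposal is correct and is essentially the paper's own argument: the same joint continuity of $(s,\zeta)\mapsto\ul{x}(s)(\zeta)$ via the embedding $W^{1,2}\hookrightarrow C$, the same use of Corollary~\ref{cor:mb representation of abstract functions} to obtain jointly measurable representatives of $\dot{x}$ and the pointwise-in-$\zeta$ fundamental theorem of calculus \eqref{eq:F db, step 3, antiderivative}, the same algebraic identity for the $\zeta$-derivative of the integrand, the same decomposition of $F(\zeta)-F(\zeta_0)$ into moving-limit terms plus a fixed-limits term controlled by Fubini and the $L^1$-bound on $\mathcal{H}'$, and the same continuity argument for the $C^1$ addendum. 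The one step you flag but do not carry out --- passing to the limit in the difference quotient of the term containing $\mathcal{H}'$, which is not continuous in $\zeta$ --- is resolved in the paper exactly within the framework you set up, namely by applying Lebesgue's differentiation theorem to the integrable function $\|\mathcal{H}'\|$ at almost every $\zeta_0$ and using the uniform continuity of $(s,\zeta)\mapsto\ul{x}(s)(\zeta)$ to freeze the remaining factors.
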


\begin{proof}
We divide the proof into two parts. In part~(i) we prove in five steps  the assertion for $\mathcal{H} \in AC([a,b],\K^{m\times m})$ and in part~(ii) we prove the strengthening for $\mathcal{H} \in C^1([a,b],\K^{m\times m})$. In the entire proof, we abbreviate $Z := [a,b]$ and $Z^{\circ} := (a,b)$. 
\smallskip

(i) As a first step, we observe that $x(s) \in W^{1,1}(Z^{\circ},\K^m)$ for every $s \in \R^+_0$ and that $s \mapsto x(s) \in W^{1,1}(Z^{\circ},\K^m)$ is continuous. 
Indeed, since $x$ is a classical solution of~\eqref{eq:xdot=Ax}, we have that
\begin{align} \label{eq:F db, step 1, 1}
\mathcal{H}x(s) \in W^{1,2}(Z^{\circ},\K^m) \qquad (s \in \R^+_0)
\end{align}
and that
$s \mapsto \mathcal{H}x(s) \in X = L^2(Z,\K^m) $ as well as
\begin{align} \label{eq:F db, step 1, 2}
s \mapsto \partial_{\zeta}( \mathcal{H}x(s) ) = P_1^{-1} \dot{x}(s) - P_1^{-1} P_0 \mathcal{H}x(s) \in X = L^2(Z,\K^m) 
\end{align}
are continuous. So, $s \mapsto \mathcal{H}x(s)$ is continuous as a function with values in $W^{1,2}(Z^{\circ},\K^m)$. Since $W^{1,2}(Z^{\circ},\K^m)$ is continuously embedded in $W^{1,1}(Z^{\circ},\K^m)$, 
\begin{align} \label{eq:F db, step 1, 2+}
s \mapsto \mathcal{H}x(s)\in  W^{1,1}(Z^{\circ},\K^m)
\end{align}
is continuous as well. Since moreover $\mathcal{H}$ belongs to $W^{1,1}(Z^{\circ},\K^{m\times m})$ by assumption, we also have that 
\begin{align} \label{eq:F db, step 1, 3}
\mathcal{H}^{-1} \in W^{1,1}(Z^{\circ},\K^{m\times m})
\qquad \text{with} \qquad
\partial_{\zeta} \mathcal{H}(\zeta)^{-1} = -\mathcal{H}(\zeta)^{-1} \mathcal{H}'(\zeta) \mathcal{H}(\zeta)^{-1}.
\end{align} 
Combining now the continuity of~\eqref{eq:F db, step 1, 2+} and~\eqref{eq:F db, step 1, 3} with the continuity of multiplication 
\begin{align} \label{eq:F db, step 1, mult contin in W^1,1}
W^{1,1}(Z^{\circ},\K) \times W^{1,1}(Z^{\circ},\K) \ni (f,g) \mapsto fg \in W^{1,1}(Z^{\circ},\K) 
\end{align}
in $W^{1,1}(Z^{\circ},\K)$ (Theorem~4.39 in~\cite{AdFo}), 
we obtain the assertion of the first step. 
\smallskip

As a second step, we observe that for every $r, t \in \R^+_0$ the map $\Phi_{r,t}: Z \to \K$ defined by
\begin{align} \label{eq:F db, step 2}
\Phi_{r,t}(\zeta) := \int_r^t \ul{x}(s)(\zeta)^* \mathcal{H}(\zeta) \ul{x}(s)(\zeta) \d s
\end{align}
is continuous and, in particular, integrable. In this equation, $\ul{x}(s)$ for every $s \in \R^+_0$ is the continuous representative of $x(s) \in W^{1,1}(Z^{\circ},\K^m)$ (first step!). 
Since $s \mapsto  x(s) \in W^{1,1}(Z^{\circ},\K^m)$ is continuous by  the first step, it follows by the continuous embedding of $W^{1,1}(Z^{\circ},\K^m)$ in $C(Z, \K^m)$ that
\begin{align} \label{eq:F db, step 2, 1}
(s,\zeta) \mapsto \ul{x}(s)(\zeta)
\end{align}
is continuous. 
And therefore, $\Phi_{r,t}$ is continuous as well. 
\smallskip

As a third step, we show that for every $r, t \in \R^+_0$ the map $\Phi_{r,t}$ is weakly differentiable with integrable weak derivative given by
\begin{align} \label{eq:F db, step 3}
\partial_{\zeta} \Phi_{r,t}(\zeta) 
&= 
\ul{x}(s)(\zeta)^* P_1^{-1}  \ul{x}(s)(\zeta) \Big|_{s=r}^{s=t}  \notag \\
&\qquad - \int_{r}^{t}  \ul{x}(s)(\zeta)^* \big(  (P_1^{-1} P_0 \mathcal{H}(\zeta))^{*} + \mathcal{H}'(\zeta) + P_1^{-1} P_0 \mathcal{H}(\zeta)  \big) \ul{x}(s)(\zeta) \d s
\end{align}
for almost all $\zeta \in Z$. 
So let $r, t \in \R^+_0$ be fixed with $r \le t$ and set $J := [r,t]$. Combining the continuity of~\eqref{eq:F db, step 1, 2+} and~\eqref{eq:F db, step 1, 3} with the continuity of~\eqref{eq:F db, step 1, mult contin in W^1,1}, we see that 
\begin{align} \label{eq:F db, def psi}
s \mapsto \psi(s) := x(s)^* \mathcal{H}x(s) = (\mathcal{H}x(s))^* \cdot \mathcal{H}^{-1} \cdot  \mathcal{H}x(s) \in W^{1,1}(Z^{\circ},\K)
\end{align}
is continuous. 
With~\eqref{eq:F db, step 1, 2} and~\eqref{eq:F db, step 1, 3} it further follows that 
\begin{align*}
\partial_{\zeta} \psi(s) = \dot{x}(s)^* P_1^{-1} x(s) + x(s)^* P_1^{-1} \dot{x}(s) - x(s)^* \big( (P_1^{-1} P_0 \mathcal{H})^* + \mathcal{H}' + P_1^{-1} P_0 \mathcal{H} \big) x(s)
\end{align*}
for all $s \in \R^+_0$. 
Choose now for every $s \in \R^+_0$ a representative $\ul{v}(s)$ of $v(s) := \dot{x}(s)$ such that $(s,\zeta) \mapsto \ul{v}(s)(\zeta)$ is measurable (Corollary~\ref{cor:mb representation of abstract functions}!) and define $\ul{\psi}(s), \ul{\omega}(s): Z \to \K$ by
\begin{align} 
\ul{\psi}(s)(\zeta) &:= \ul{x}(s)(\zeta)^* \mathcal{H}(\zeta) \ul{x}(s)(\zeta) 
\label{eq:F db, def ul-psi} \\
\ul{\omega}(s)(\zeta)& := \ul{v}(s)(\zeta)^* P_1^{-1} \ul{x}(s)(\zeta) + \ul{x}(s)(\zeta)^* P_1^{-1} \ul{v}(s)(\zeta)   \notag \\
&\qquad - \ul{x}(s)(\zeta)^* \big(  (P_1^{-1} P_0 \mathcal{H}(\zeta))^{*} + \mathcal{H}'(\zeta) + P_1^{-1} P_0 \mathcal{H}(\zeta)  \big) \ul{x}(s)(\zeta) 
\label{eq:F db, def ul-omega}
\end{align}
for all $(s,\zeta) \in \R^+_0 \times Z$. Then $\ul{\psi}(s)$, $\ul{\omega}(s)$ are representatives of $\psi(s)$, $\partial_{\zeta} \psi(s)$ for every $s \in \R^+_0$ and 
\begin{align*}
(s,\zeta) \mapsto \ul{\psi}(s)(\zeta) \qquad \text{and} \qquad (s,\zeta) \mapsto \ul{\omega}(s)(\zeta)
\end{align*}
are continuous or measurable, respectively. So, by Tonelli's theorem and by 
the continuity of~\eqref{eq:F db, def psi}, it follows that 
\begin{align} 
\int_{J\times Z} |\ul{\psi}(s)(\zeta)| \d (s,\zeta) &= \int_J \norm{\psi(s)}_{1} \d s \le \lambda(J) \sup_{s \in J}  \norm{\psi(s)}_{1} < \infty 
\label{eq:F db, step 3, ul-psi ib} \\
\int_{J\times Z} |\ul{\omega}(s)(\zeta)| \d (s,\zeta) &= \int_J \norm{\partial_{\zeta} \psi(s)}_{1} \d s \le \lambda(J) \sup_{s \in J}  \norm{\partial_{\zeta} \psi(s)}_{1} < \infty 
\label{eq:F db, step 3, ul-omega ib}
\end{align}
We can thus apply Fubini's theorem to see that for every $\phi \in C_c^{\infty}(Z^{\circ},\K)$
\begin{align} \label{eq:F db, step 3, berechnung schw abl von Phi_r,t}
\int_Z \phi'(\zeta) \Phi_{r,t}(\zeta) \d  \zeta &= \int_J \int_Z \phi'(\zeta) \ul{\psi}(s)(\zeta) \d \zeta \, \d s \notag \\
&= -\int_J \int_Z \phi(\zeta) \ul{\omega}(s)(\zeta) \d \zeta \, \d s = - \int_Z \phi(\zeta) \int_J \ul{\omega}(s)(\zeta) \d s \, \d \zeta.
\end{align}
So, by~\eqref{eq:F db, step 3, ul-omega ib} and~\eqref{eq:F db, step 3, berechnung schw abl von Phi_r,t} the map $\Phi_{r,t}$ is weakly differentiable with integrable weak derivative given by 
\begin{align} \label{eq:F db, step 3, schw abl von Phi_r,t}
\partial_{\zeta} \Phi_{r,t}(\zeta) = \int_J \ul{\omega}(s)(\zeta) \d s = \int_r^t \ul{\omega}(s)(\zeta) \d s
\end{align}
for almost every $\zeta \in Z$. Since $\int_r^t \dot{x}(s)^* P_1^{-1} x(s) + x(s)^* P_1^{-1} \dot{x}(s) \d s = x(s)^* P_1^{-1} x(s) |_{s=r}^{s=t}$ we have (Corollary~\ref{cor:mb representation of abstract functions}!)
\begin{align} \label{eq:F db, step 3, antiderivative}
\int_r^t \ul{v}(s)(\zeta)^* P_1^{-1} \ul{x}(s)(\zeta) + \ul{x}(s)(\zeta)^* P_1^{-1} \ul{v}(s)(\zeta) \d s
= \ul{x}(s)(\zeta)^* P_1^{-1} \ul{x}(s)(\zeta) \Big|_{s=r}^{s=t}
\end{align}
for almost every $\zeta \in Z$. Combining now~\eqref{eq:F db, step 3, schw abl von Phi_r,t} with~\eqref{eq:F db, def ul-omega} and~\eqref{eq:F db, step 3, antiderivative}, we obtain the desired formula~\eqref{eq:F db, step 3}. 
\smallskip

As a fourth step, we show that $F:Z \to \K$ is absolutely continuous. 
We immediately see from the second and third step, that for every $r, t \in \R^+_0$ the map $\Phi_{r,t}$ is absolutely continuous with
\begin{align} \label{eq:F db, step 4, 1}
\Phi_{r,t}(\zeta) = \Phi_{r,t}(\zeta_0) + \int_{\zeta_0}^{\zeta} \partial_{\eta} \Phi_{r,t}(\eta) \d \eta
= \Phi_{r,t}(\zeta_0) + \int_{\zeta_0}^{\zeta} \utilde{\Psi}_{r,t}(\eta) \d \eta + \int_{\zeta_0}^{\zeta} \widetilde{\Psi}_{r,t}(\eta) \d \eta
\end{align}
for every $\zeta, \zeta_0 \in Z$, where
\begin{align*}
\utilde{\Psi}_{r,t}(\eta) &:= \ul{x}(s)(\eta)^* P_1^{-1} \ul{x}(s)(\eta) \big|_{s=r}^{s=t} \notag \\
&\qquad - \int_r^t \ul{x}(s)(\eta)^* \big(  (P_1^{-1} P_0 \mathcal{H}(\eta))^{*} + P_1^{-1} P_0 \mathcal{H}(\eta)  \big) \ul{x}(s)(\eta) \d s 
\\
\widetilde{\Psi}_{r,t}(\eta) &:= \int_r^t \ul{x}(s)(\eta)^*   \mathcal{H}'(\eta)  \ul{x}(s)(\eta) \d s.
\end{align*}
We also have 
\begin{align} \label{eq:F db, step 4, 2}
F(\zeta) 
&= \Phi_{r(\zeta),t(\zeta)}(\zeta) = \int_{r(\zeta)}^{t(\zeta)} \ul{\psi}(s)(\zeta) \d s \notag \\
&= \Phi_{r(\zeta_0),t(\zeta_0)}(\zeta) + \int_{t(\zeta_0)}^{t(\zeta)} \ul{\psi}(s)(\zeta) \d s -  \int_{r(\zeta_0)}^{r(\zeta)} \ul{\psi}(s)(\zeta) \d s
\end{align}
for every $\zeta, \zeta_0 \in Z$. So, by~\eqref{eq:F db, step 4, 1} and~\eqref{eq:F db, step 4, 2} we see that
\begin{align} \label{eq:F db, step 4, F(z)-F(z_0)}
F(\zeta) - F(\zeta_0) 
&= \int_{\zeta_0}^{\zeta} \utilde{\Psi}_{r(\zeta_0),t(\zeta_0)}(\eta) \d \eta + \int_{\zeta_0}^{\zeta} \widetilde{\Psi}_{r(\zeta_0),t(\zeta_0)}(\eta) \d \eta \notag \\
&\qquad + \int_{t(\zeta_0)}^{t(\zeta)} \ul{\psi}(s)(\zeta) \d s -  \int_{r(\zeta_0)}^{r(\zeta)} \ul{\psi}(s)(\zeta) \d s
\end{align}
for every $\zeta, \zeta_0 \in Z$. 
Choose now a compact interval $J$ such that
\begin{align} \label{eq:F db, step 4, r(z),t(z) in J}
r(\zeta), t(\zeta) \in J \qquad (\zeta \in Z)
\end{align}
($r, t$ are continuously differentiable on the compact interval $Z$ by assumption!).
With the help of~\eqref{eq:F db, step 4, r(z),t(z) in J} it then follows by the definition of $\utilde{\Psi}_{r,t}$, $\widetilde{\Psi}_{r,t}$, $\ul{\psi}$ 
that
\begin{align}
\bigg| \int_{\zeta_0}^{\zeta} \utilde{\Psi}_{r(\zeta_0),t(\zeta_0)}(\eta) \d \eta \bigg|
&\le 
2 \big( \norm{P_1^{-1}} + \norm{P_1^{-1}P_0} \big) \norm{\ul{x}}_{J\times Z,\infty}^2 \cdot \notag \\
&\qquad \qquad \qquad \cdot \Big( 1+\sup_{\eta \in Z} \norm{\mathcal{H}(\eta)} \lambda(J) \Big) \, |\zeta-\zeta_0|
\label{eq:F db, step 4, 3.1}
\end{align}
\begin{align}
\bigg| \int_{\zeta_0}^{\zeta} \widetilde{\Psi}_{r(\zeta_0),t(\zeta_0)}(\eta) \d \eta \bigg|
&\le 
\norm{\ul{x}}_{J\times Z,\infty}^2 \lambda(J) \, \bigg| \int_{\zeta_0}^{\zeta} \norm{\mathcal{H}'(\eta)} \d \eta \bigg|
\label{eq:F db, step 4, 3.2}
\end{align}
\begin{align}
\bigg| \int_{t(\zeta_0)}^{t(\zeta)} \ul{\psi}(s)(\zeta) \d s \bigg|,   \bigg| \int_{r(\zeta_0)}^{r(\zeta)} \ul{\psi}(s)(\zeta) \d s \bigg|
&\le 
\norm{\ul{x}}_{J\times Z,\infty}^2 \sup_{\eta \in Z} \norm{\mathcal{H}(\eta)} \cdot \notag \\
& \qquad \qquad \cdot \max\{ \|r'\|_{\infty}, \|t'\|_{\infty} \} \, |\zeta-\zeta_0|
\label{eq:F db, step 4, 3.3}
\end{align} 
for all $\zeta, \zeta_0 \in Z$. Since $\mathcal{H}'$ is integrable and since $\norm{\ul{x}}_{J\times Z,\infty} := \sup_{(s,\zeta)\in J\times Z} |\ul{x}(s)(\zeta)| < \infty$ and $\|r'\|_{\infty}, \|t'\|_{\infty} < \infty$ by the continuity of~\eqref{eq:F db, step 2, 1} and by  assumption respectively, 
it follows from~\eqref{eq:F db, step 4, F(z)-F(z_0)} with the help of~\eqref{eq:F db, step 4, 3.1}, \eqref{eq:F db, step 4, 3.2}, \eqref{eq:F db, step 4, 3.3} that $F$ is absolutely continuous, as desired. 
\smallskip

As a fifth step, we show that the derivative of $F$ -- which by the fourth step exists almost everywhere -- is given by the asserted formula~\eqref{eq:abl von F} for almost every $\zeta$. 
Since $\utilde{\Psi}_{r,t}$ and $(s,\zeta) \mapsto \ul{\psi}(s)(\zeta)$ are continuous, it follows that
\begin{align}  \label{eq:F db, step 5, 1}
\frac{1}{\zeta-\zeta_0} \int_{\zeta_0}^{\zeta} \utilde{\Psi}_{r(\zeta_0),t(\zeta_0)}(\eta) \d \eta \longrightarrow \utilde{\Psi}_{r(\zeta_0),t(\zeta_0)}(\zeta_0) \qquad (\zeta \to \zeta_0)
\end{align}
for every $\zeta_0 \in Z$ and that
\begin{align} \label{eq:F db, step 5, 2}
&\frac{1}{\zeta-\zeta_0} \int_{t(\zeta_0)}^{t(\zeta)} \ul{\psi}(s)(\zeta) \d s - \frac{1}{\zeta-\zeta_0} \int_{r(\zeta_0)}^{r(\zeta)} \ul{\psi}(s)(\zeta) \d s \notag \\
&\qquad \qquad \longrightarrow 
t'(\zeta_0) \, \ul{\psi}(s)(\zeta_0) \big|_{s=t(\zeta_0)} - r'(\zeta_0) \, \ul{\psi}(s)(\zeta_0) \big|_{s=r(\zeta_0)} \qquad (\zeta \to \zeta_0)
\end{align}
for every $\zeta_0 \in Z$.
Choose now a null set $N$ such that
\begin{align*} 
\frac{1}{\zeta-\zeta_0} \int_{\zeta_0}^{\zeta}  \norm{ \mathcal{H}'(\eta) - \mathcal{H}'(\zeta_0)} \d \eta \longrightarrow 0 \qquad (\zeta \to \zeta_0)
\end{align*} 
for all $\zeta_0 \in Z \setminus N$, which is possible by the integrability of $\mathcal{H}'$ and Lebesgue's differentiation theorem. Since by the definition of $\widetilde{\Psi}_{r,t}$
\begin{align*}
&\bigg| \frac{1}{\zeta-\zeta_0} \int_{\zeta_0}^{\zeta} \widetilde{\Psi}_{r(\zeta_0),t(\zeta_0)}(\eta) - \widetilde{\Psi}_{r(\zeta_0),t(\zeta_0)}(\zeta_0) \d \eta \bigg| \\
&\qquad \qquad \le 
\norm{\ul{x}}_{J\times Z,\infty}^2 \lambda(J) \, \bigg| \frac{1}{\zeta-\zeta_0} \int_{\zeta_0}^{\zeta}  \norm{ \mathcal{H}'(\eta) - \mathcal{H}'(\zeta_0)} \d \eta \bigg| \\
&\quad \qquad \qquad + 2 \norm{\ul{x}}_{J\times Z,\infty} \norm{\mathcal{H}'(\zeta_0)} \bigg| \frac{1}{\zeta-\zeta_0} \int_{\zeta_0}^{\zeta} \int_{r(\zeta_0)}^{t(\zeta_0)} | \ul{x}(s)(\eta) - \ul{x}(s)(\zeta_0) | \d s \d \eta  \bigg|
\end{align*}
for every $\zeta, \zeta_0 \in Z$ (with $J$ as in~\eqref{eq:F db, step 4, r(z),t(z) in J}), it follows that
\begin{align}  \label{eq:F db, step 5, 3}
\frac{1}{\zeta-\zeta_0} \int_{\zeta_0}^{\zeta} \widetilde{\Psi}_{r(\zeta_0),t(\zeta_0)}(\eta) \d \eta \longrightarrow \widetilde{\Psi}_{r(\zeta_0),t(\zeta_0)}(\zeta_0) \qquad (\zeta \to \zeta_0)
\end{align}
for every $\zeta_0 \in Z \setminus N$. 
Combining now~\eqref{eq:F db, step 5, 1}, \eqref{eq:F db, step 5, 2}, \eqref{eq:F db, step 5, 3} with~\eqref{eq:F db, step 4, F(z)-F(z_0)}, we conclude that $F$ is differentiable at every $\zeta_0 \in Z \setminus N$ with derivative
\begin{align} \label{eq:F db, step 5, 4}
F'(\zeta_0) 
&= \utilde{\Psi}_{r(\zeta_0),t(\zeta_0)}(\zeta_0) + \widetilde{\Psi}_{r(\zeta_0),t(\zeta_0)}(\zeta_0) + t'(\zeta_0) \, \ul{\psi}(s)(\zeta_0) \big|_{s=t(\zeta_0)} - r'(\zeta_0) \, \ul{\psi}(s)(\zeta_0) \big|_{s=r(\zeta_0)} \notag \\
&= \partial_{\zeta} \Phi_{r(\zeta_0),t(\zeta_0)}(\zeta) \big|_{\zeta = \zeta_0} + t'(\zeta_0) \, \ul{\psi}(s)(\zeta_0) \big|_{s=t(\zeta_0)} - r'(\zeta_0) \, \ul{\psi}(s)(\zeta_0) \big|_{s=r(\zeta_0)}
\end{align}
for every $\zeta_0 \in Z \setminus N$. 
In view of
\eqref{eq:F db, step 3} from the third step, this 
is precisely the asserted formula~\eqref{eq:abl von F}  for the derivative (with $\zeta$ replaced by $\zeta_0$). 
\smallskip

(ii) We finally show -- by some slight modifications of the arguments above -- that $F$ is even continuously differentiable under the strengthened assumption that $$\mathcal{H} \in C^1(Z,\K^{m\times m}).$$ So, let $\mathcal{H} \in C^1(Z,\K^{m\times m})$. We can then argue until~\eqref{eq:F db, step 4, F(z)-F(z_0)} in exactly the same way as above. And this equation~\eqref{eq:F db, step 4, F(z)-F(z_0)}, under our strengthened assumption, almost immediately yields the desired conclusion. Indeed, for $\mathcal{H} \in C^1(Z,\K^{m\times m})$ not only $\utilde{\Psi}_{r,t}$, $(s,\zeta) \mapsto \ul{\psi}(s)(\zeta)$ but also $\widetilde{\Psi}_{r,t}$ is continuous and therefore not only~\eqref{eq:F db, step 5, 1}, \eqref{eq:F db, step 5, 2} but also~\eqref{eq:F db, step 5, 3} holds true for every $\zeta_0 \in Z$. So, from~\eqref{eq:F db, step 4, F(z)-F(z_0)} we see that $F$ is differentiable at every $\zeta_0 \in Z$ with derivative given by~\eqref{eq:F db, step 5, 4}. 
And this expression, in turn, is continuous in $\zeta_0$ under our strengthened assumption. 
\end{proof}

%

\begin{ex}
Choose $A$ to be the port-Hamiltonian operator on $X := L^2(Z,\R)$ corresponding to the transport equation on $Z := [0,1]$, that is,  
\begin{align}
A = \partial_{\zeta} \qquad \text{with} \qquad D(A) = W^{1,2}(Z^{\circ},\R)
\end{align}
and thus $\mathcal{H}(\zeta) = 1 \in \R$ for all $\zeta \in Z$ and $P_1 = 1, P_0 = 0 \in \R$.  
In particular, $s \mapsto x(s) := 1$ is a classical solution of $\dot{x} = Ax$. 
Choose now $\ul{x}(s), \ul{v}(s): Z \to \R$ for $s \in J := [0,1]$ in the following way:
\begin{align}
\ul{x}(s)(\zeta) := 1 \qquad \text{and} \qquad \ul{v}(s)(\zeta) := \chi_{E}(s,\zeta)
\end{align}
for every $(s,\zeta) \in J \times Z$, where $E$ is chosen as in Example~\ref{ex:mb representation of abstract functions}~(ii).
We then have that $\ul{x}(s)$ for every $s \in J$ is the continuous representative of $x(s)$ and that $\ul{v}(s)$ for every $s \in J$ is a representative of $\dot{x}(s)$, 
but with this specific choice of representatives 
the formula~\eqref{eq:F db, step 3, antiderivative} -- and hence the formula for the first integral from the last equation on page~113 of~\cite{JaZw} -- becomes false. Indeed,
\begin{align}
\int_0^1 \ul{v}(s)(\zeta)^* P_1^{-1} \ul{x}(s)(\zeta) + \ul{x}(s)(\zeta)^* P_1^{-1} \ul{v}(s)(\zeta) \d s
&= 2 \int_0^1 \chi_E(s,\zeta) \d s = 2 \notag \\
&\ne 0 = \ul{x}(s)(\zeta)^* P_1^{-1} \ul{x}(s)(\zeta) \Big|_{s=0}^{s=1}
\end{align}
for every $\zeta \in Z$. $\blacktriangleleft$
\end{ex}


As a third 
preparatory lemma, we show 
the following approximation result for an energy density $\mathcal{H}$ of bounded variation by absolutely continuous energy densities $\mathcal{H}_n$.

\begin{lm} \label{lm:approx-en-density}
Suppose $\mathcal{H} \in BV([a,b],\K^{m\times m})$ is an energy density with lower and upper bounds denoted by $\ul{m}, \ol{m}$. Then there exists a sequence of energy densities $\mathcal{H}_n \in AC([a,b],\K^{m\times m})$ 
such that
\begin{itemize}
\item[(i)] $\mathcal{H}_n(\zeta) \longrightarrow \mathcal{H}(\zeta)$ as $n \to \infty$ for almost every $\zeta \in [a,b]$
\item[(ii)] $\ul{m} \le \mathcal{H}_n(\zeta) \le \ol{m}$ for every $\zeta \in [a,b]$ and every $n \in \N$
\item[(iii)] $\int_a^b \norm{\mathcal{H}_n'(\zeta)} \d \zeta \le \norm{\mathcal{H}(a)} + \operatorname{Var}(\mathcal{H}) + \norm{\mathcal{H}(b)}$ for all $n \in \N$. 
\end{itemize}
\end{lm}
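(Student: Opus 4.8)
The plan is to construct the approximating densities $\mathcal{H}_n$ by mollifying a constant continuation of $\mathcal{H}$ to the whole line. First I would extend $\mathcal{H}$ to $\tilde{\mathcal{H}} \colon \R \to \K^{m \times m}$ by setting $\tilde{\mathcal{H}}(\zeta) := \mathcal{H}(a)$ for $\zeta < a$ and $\tilde{\mathcal{H}}(\zeta) := \mathcal{H}(b)$ for $\zeta > b$. This continuation introduces no new increments, so $\tilde{\mathcal{H}}$ is again of bounded variation with $\operatorname{Var}(\tilde{\mathcal{H}}) = \operatorname{Var}(\mathcal{H})$, it is self-adjoint almost everywhere, and it still obeys $\ul{m} \le \tilde{\mathcal{H}}(\zeta) \le \ol{m}$ for almost every $\zeta \in \R$. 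Fixing a standard mollifier $\rho \in C_c^{\infty}(\R)$ with $\rho \ge 0$, $\supp \rho \subset [-1,1]$ and $\int_{\R} \rho = 1$, and writing $\rho_n(\zeta) := n \rho(n\zeta)$, I would then define
\begin{align*}
\mathcal{H}_n(\zeta) := (\rho_n * \tilde{\mathcal{H}})(\zeta) = \int_{\R} \rho_n(\tau)\, \tilde{\mathcal{H}}(\zeta - \tau) \d\tau \qquad (\zeta \in [a,b]).
\end{align*}
Each $\mathcal{H}_n$ is smooth and in particular belongs to $AC([a,b],\K^{m\times m})$.

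Property (ii) is then immediate from the averaging structure: for every $v \in \K^m$ one has $v^* \mathcal{H}_n(\zeta) v = \int_{\R} \rho_n(\tau)\, v^* \tilde{\mathcal{H}}(\zeta - \tau) v \d\tau$, and since $\rho_n \ge 0$, $\int_{\R} \rho_n = 1$ and $\ul{m} |v|^2 \le v^* \tilde{\mathcal{H}}(\zeta - \tau) v \le \ol{m} |v|^2$ for almost every $\tau$, the same two-sided bound is inherited by $\mathcal{H}_n(\zeta)$; self-adjointness of $\mathcal{H}_n(\zeta)$ follows in the same integral fashion. For property (i) I would exploit that a function of bounded variation has at most countably many points of discontinuity, so that $\mathcal{H}$ is continuous at almost every $\zeta \in (a,b)$; at each such point the standard mollifier estimate $|\mathcal{H}_n(\zeta) - \tilde{\mathcal{H}}(\zeta)| \le \sup_{|\tau| \le 1/n} |\tilde{\mathcal{H}}(\zeta - \tau) - \tilde{\mathcal{H}}(\zeta)|$ yields $\mathcal{H}_n(\zeta) \to \tilde{\mathcal{H}}(\zeta) = \mathcal{H}(\zeta)$, which is the asserted almost-everywhere convergence.

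The substantive part is property (iii), and I would prove it by a direct variation argument, avoiding distributional derivatives. Since each $\mathcal{H}_n$ is $C^1$, the classical identity for curves gives $\int_a^b \norm{\mathcal{H}_n'(\zeta)} \d\zeta = \operatorname{Var}(\mathcal{H}_n)$. For an arbitrary partition $a = \zeta_0 < \dots < \zeta_L = b$ I would write each increment as $\mathcal{H}_n(\zeta_l) - \mathcal{H}_n(\zeta_{l-1}) = \int_{\R} \rho_n(\tau) \big[ \tilde{\mathcal{H}}(\zeta_l - \tau) - \tilde{\mathcal{H}}(\zeta_{l-1} - \tau) \big] \d\tau$, pull the norm inside the nonnegative mass-one integral, and sum over $l$ to obtain
\begin{align*}
\sum_{l=1}^{L} \norm{\mathcal{H}_n(\zeta_l) - \mathcal{H}_n(\zeta_{l-1})} \le \int_{\R} \rho_n(\tau) \sum_{l=1}^{L} \norm{\tilde{\mathcal{H}}(\zeta_l - \tau) - \tilde{\mathcal{H}}(\zeta_{l-1} - \tau)} \d\tau \le \operatorname{Var}(\tilde{\mathcal{H}}),
\end{align*}
where the last step uses that the inner sum is dominated by $\operatorname{Var}(\tilde{\mathcal{H}})$ for every fixed $\tau$. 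Taking the supremum over all partitions gives $\int_a^b \norm{\mathcal{H}_n'} \d\zeta \le \operatorname{Var}(\tilde{\mathcal{H}}) = \operatorname{Var}(\mathcal{H})$, which is in fact slightly stronger than the claimed bound $\norm{\mathcal{H}(a)} + \operatorname{Var}(\mathcal{H}) + \norm{\mathcal{H}(b)}$.

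I expect the points demanding the most care to be, first, the identity $\operatorname{Var}(\tilde{\mathcal{H}}) = \operatorname{Var}(\mathcal{H})$ for the constant continuation, which one checks by projecting any partition of a larger interval onto $[a,b]$ so that the constant pieces outside $[a,b]$ contribute nothing, and, second, the choice of extension itself. A continuation by $0$ would reproduce exactly the boundary terms $\norm{\mathcal{H}(a)}$ and $\norm{\mathcal{H}(b)}$ in the total variation but would destroy the lower bound $\ul{m} \le \mathcal{H}_n$ near the endpoints, whereas the constant continuation preserves the bounds in (ii) at the harmless cost of the looser constant appearing in (iii). Finally, I would keep the norm used in $\operatorname{Var}$ and in (iii) fixed throughout, so that the variation estimate and the curve-length identity remain consistent.
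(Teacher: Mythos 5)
Your proposal follows the same mollification strategy as the paper's own proof; the only substantive difference is the extension of $\mathcal{H}$ to the real line before convolving. The paper uses the extension by zero, whose total variation over $\R$ equals $\norm{\mathcal{H}(a)} + \operatorname{Var}(\mathcal{H}) + \norm{\mathcal{H}(b)}$ --- exactly the constant appearing in (iii) --- while your constant continuation has variation $\operatorname{Var}(\mathcal{H})$ and so gives the stronger bound $\int_a^b \norm{\mathcal{H}_n'(\zeta)} \d \zeta \le \operatorname{Var}(\mathcal{H})$, which implies (iii). Your closing remark about the zero continuation is on target: it is precisely the extension used in the paper, and for it the averaging argument for (ii) degenerates at points within distance $\eps_n$ of $a$ or $b$, where the convolution mixes in the value $0$ and the lower bound $\ul{m} \le \mathcal{H}_n(\zeta)$ is no longer guaranteed for \emph{every} $\zeta \in [a,b]$. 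So your variant is not merely an alternative; it repairs that boundary defect while also simplifying the constant in (iii).

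There is, however, one gap you should close. You claim that the constant continuation is self-adjoint and satisfies $\ul{m} \le \tilde{\mathcal{H}} \le \ol{m}$ almost everywhere on $\R$. That claim rests on the two pointwise values $\mathcal{H}(a)$ and $\mathcal{H}(b)$, which the definition of an energy density does not control: self-adjointness and the two-sided bound are required only for almost every $\zeta \in [a,b]$, and $\{a,b\}$ is a null set. If, say, $\mathcal{H}(a)$ fails the lower bound or fails to be self-adjoint, then $\tilde{\mathcal{H}}$ inherits that defect on all of $(-\infty,a)$ --- a set of positive measure --- and the resulting $\mathcal{H}_n$ violate (ii), and indeed fail to be energy densities, near $a$. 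The repair is cheap: extend instead by the one-sided limits $\mathcal{H}(a+)$ and $\mathcal{H}(b-)$, which exist because $\mathcal{H}$ is of bounded variation, and which are self-adjoint and satisfy $\ul{m} \le \mathcal{H}(a+), \mathcal{H}(b-) \le \ol{m}$ because they can be computed along sequences of points in the full-measure set where these (closed) conditions hold. Substituting these limits for the endpoint values changes $\mathcal{H}$ only on the null set $\{a,b\}$, does not increase $\operatorname{Var}(\mathcal{H})$, and leaves assertion (i) untouched; with this modification your argument goes through as written.
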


\begin{proof}
We argue by mollification. So, let $j \in C_c^{\infty}(\R)$ be such that
\begin{align} \label{eq:approxlm, 1}
j(r) \ge 0 \qquad (r \in \R) \qquad \text{and} \qquad \int_{\R} j(r) \d r = 1
\end{align}
and let $j_{\eps}(r) := 1/\eps \cdot  j(r/\eps)$ for $r \in \R$ and $\eps >0$. 
Since $\mathcal{H} \in BV([a,b],\K^{m\times m}) \subset L^1([a,b],\K^{m\times m})$, it follows that 
$j_{\eps} * \mathcal{H} \in C_c^{\infty}(\R,\K^{m\times m})$ and that $j_{\eps} * \mathcal{H} \longrightarrow \mathcal{H}$ in $L^1$ as $\eps \searrow 0$. In particular, there exists a sequence $(\eps_n)$ such that $\eps_n \searrow 0$ and
\begin{align} \label{eq:approxlm, 2}
(j_{\eps_n} * \mathcal{H})(\zeta) \longrightarrow \mathcal{H}(\zeta) \qquad (n \to \infty)
\end{align} 
for almost every $\zeta \in [a,b]$. Setting now 
\begin{align*}
\mathcal{H}_n := (j_{\eps_n} * \mathcal{H})|_{[a,b]},
\end{align*}
we obtain 
$\mathcal{H}_n \in C^{\infty}([a,b],\K^{m\times m}) \subset AC([a,b],\K^{m\times m})$ for all $n \in \N$.
Also, assertion~(i) follows from~\eqref{eq:approxlm, 2} and assertion~(ii) follows from~\eqref{eq:en-density, lower and upper bound} using~\eqref{eq:approxlm, 1}.
It remains to prove assertion~(iii). Since $\mathcal{H}_n \in C^1([a,b],\K^{m\times m})$, it follows by a well-known formula for curve lengths (Theorem~VIII.1.3 of~\cite{AmEs}) that
\begin{align} \label{eq:approxlm, 3}
\int_a^b \norm{\mathcal{H}_n'(\zeta)} \d \zeta = \operatorname{Var}(\mathcal{H}_n)
\end{align}
for every $n \in \N$. Since, moreover, $\mathcal{H} \in BV([a,b],\K^{m\times m})$, it follows that 
\begin{align*}
\tilde{\mathcal{H}} \in BV(\R,\K^{m\times m}) 
\qquad \text{and} \qquad
\operatorname{Var}(\tilde{\mathcal{H}}) = \norm{\mathcal{H}(a)} + \operatorname{Var}(\mathcal{H}) + \norm{\mathcal{H}(b)},
\end{align*}
where $\tilde{\mathcal{H}}: \R \to \K^{m\times m}$ is the zero-extension of $\mathcal{H}$. So, for every partition $(t_l)_{l \in \{0,\dots, L\}}$ of $[a,b]$, we see using~\eqref{eq:approxlm, 1} that
\begin{align*}
&\sum_{l=1}^L \norm{ \mathcal{H}_n(t_l) - \mathcal{H}_n(t_{l-1}) }
= \sum_{l=1}^L \norm{ \int_{\R} j_{\eps_n}(r) \big( \tilde{\mathcal{H}}(t_l-r) - \tilde{\mathcal{H}}(t_{l-1}-r) \big) \d r } \notag \\
&\qquad \le \int_{\R} j_{\eps_n}(r) \sum_{l=1}^L \norm{ \tilde{\mathcal{H}}(t_l-r) - \tilde{\mathcal{H}}(t_{l-1}-r) } \d r 
\le \operatorname{Var}(\tilde{\mathcal{H}}) \notag \\
&\qquad = \norm{\mathcal{H}(a)} + \operatorname{Var}(\mathcal{H}) + \norm{\mathcal{H}(b)}
\end{align*}
and therefore
\begin{align} \label{eq:approxlm, 4}
\operatorname{Var}(\mathcal{H}_n) \le \norm{\mathcal{H}(a)} + \operatorname{Var}(\mathcal{H}) + \norm{\mathcal{H}(b)}
\end{align}
for every $n \in \N$. Combining now~\eqref{eq:approxlm, 3} and~\eqref{eq:approxlm, 4}, we obtain the desired conclusion~(iii) and we are done. 
\end{proof}


With the above lemmas at hand, we can now show the following exponential stability result for port-Hamiltonian operators with energy densities of bounded variation. It is a generalization of the respective stability results from~\cite{JaZw} (Theorem~9.1.3) and~\cite{Au} (Theorem~4.1.5) where the energy densities are required to be continuously differentiable or Lipschitz continuous, respectively.

\begin{thm} \label{thm:stab-thm}
Suppose $A: D(A) \subset X \to X$ is a first-order port-Hamiltonian operator with energy density $\mathcal{H} \in BV([a,b],\K^{m\times m})$, where $X := L^2([a,b],\K^m)$ is endowed with the $\mathcal{H}$-energy norm $\norm{\cdot}_X$. Suppose further that the domain of $A$ is of the form
\begin{align} \label{eq:stab-thm, domain-ass}
D(A) = \big\{ x \in X: \mathcal{H}x \in W^{1,2}((a,b),\K^m) \text{ and } W(\mathcal{H}x)|_{\partial} = 0 \big\}
\end{align}
for some matrix $W \in \K^{m\times 2m}$ and that there exists $\kappa \in (0,\infty)$ such that for $c = a$ or $c = b$ one has
\begin{align} \label{eq:stab-thm, dissip-ass}
\Re \scprd{x,Ax}_X \le -\kappa |(\mathcal{H}x)(c)|^2 \qquad (x \in D(A)).
\end{align}
Then $A$ generates an exponentially stable contraction semigroup on $X$. 
\end{thm}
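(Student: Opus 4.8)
The plan is to first secure generation of a contraction semigroup, then reduce exponential stability to a finite-time observability estimate, prove that estimate for smooth energy densities by the sideways-energy machinery of Lemma~\ref{lm:F db a.e.}, and finally transport it to the bounded-variation case via the approximation of Lemma~\ref{lm:approx-en-density}.

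First I would observe that the domination hypothesis~\eqref{eq:stab-thm, dissip-ass} in particular gives $\Re\scprd{x,Ax}_X \le 0$ for all $x \in D(A)$, so that $A$ is dissipative. Since its domain~\eqref{eq:stab-thm, domain-ass} incorporates exactly $m$ linear boundary conditions, Lemma~\ref{lm:char-contr-sgr-gen-property} applies and shows that $A$ generates a contraction semigroup $(T(t))_{t\ge 0}$ on $(X,\norm{\cdot}_X)$, with $W$ automatically of full rank $m$. For such a semigroup, integrating the dissipation estimate~\eqref{eq:stab-thm, dissip-ass} along a classical solution $x(s)=T(s)x_0$ with $x_0 \in D(A)$ yields the energy balance $\norm{x(\tau)}_X^2 \le \norm{x_0}_X^2 - 2\kappa\int_0^\tau |(\mathcal{H}x(s))(c)|^2 \d s$. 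Hence exponential stability will follow once I establish a finite-time observability inequality $\int_0^\tau |(\mathcal{H}x(s))(c)|^2 \d s \ge \gamma\norm{x_0}_X^2$ for some fixed $\tau,\gamma>0$ and all $x_0$ in the dense set $D(A)$: this forces $\norm{T(\tau)}_{\mathcal{L}(X)} < 1$ and, by iteration, exponential decay.

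For an \emph{absolutely continuous} energy density I would prove this observability inequality through the sideways energy $F$ of~\eqref{eq:def von F}, which by Lemma~\ref{lm:F db a.e.} is absolutely continuous with derivative~\eqref{eq:abl von F}. The idea is to choose the time-window endpoints $r(\zeta),t(\zeta)$ as affine functions whose slopes are tuned to the characteristic speeds of the system, i.e.\ the eigenvalues of $P_1\mathcal{H}$, which are real, nonzero and uniformly bounded away from $0$ and $\infty$ by~\eqref{eq:en-density, lower and upper bound}; this arranges the pointwise-in-$s$ terms $\ul{x}(s)(\zeta)^*\big(t'(\zeta)\mathcal{H}(\zeta)+P_1^{-1}\big)\ul{x}(s)(\zeta)$ and the analogous term at $s=r(\zeta)$ in~\eqref{eq:abl von F} to be sign-definite and hence absorbable, while the bulk integral in~\eqref{eq:abl von F} is dominated by $C\,(1+\norm{\mathcal{H}'(\zeta)})\,F(\zeta)$. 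Grönwall's inequality then propagates control of $F$ from the dissipative boundary $c$ across the whole interval, so that $\int_a^b F$ is dominated by a multiple of $F(c)$, and the window is chosen so that, via the forward energy identity, the boundary observation at $c$ ultimately bounds $\norm{x_0}_X^2$ from below. Crucially, the resulting constant $\gamma$ depends only on $\ul{m},\ol{m},\norm{P_1^{-1}},\norm{P_0}$ and $\int_a^b \norm{\mathcal{H}'}$, and not on finer features of $\mathcal{H}$.

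To remove the smoothness assumption I would invoke Lemma~\ref{lm:approx-en-density}, obtaining absolutely continuous densities $\mathcal{H}_n \to \mathcal{H}$ a.e.\ with the uniform bounds $\ul{m}\le\mathcal{H}_n\le\ol{m}$ and $\int_a^b\norm{\mathcal{H}_n'}\le\norm{\mathcal{H}(a)}+\operatorname{Var}(\mathcal{H})+\norm{\mathcal{H}(b)}$. By the previous step the associated operators $A_n$ generate contraction semigroups $(T_n(t))$ on $(X,\norm{\cdot}_{X_n})$ that are \emph{uniformly} exponentially stable, $\norm{T_n(t)}_{\mathcal{L}(X_n)}\le M\e^{-\omega t}$ with $M,\omega$ independent of $n$. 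To pass to the limit I would establish strong resolvent convergence $A_n\to A$, most conveniently after the substitution $w=\mathcal{H}x$, which makes the domain $\{w\in W^{1,2}((a,b),\K^m):Ww|_{\partial}=0\}$ independent of $n$ and leaves only the weight $\mathcal{H}_n^{-1}$ and the zeroth-order coefficient $n$-dependent; Trotter--Kato then gives $T_n(t)\to T(t)$ strongly. Since $\norm{\cdot}_{X_n}\to\norm{\cdot}_X$ pointwise by dominated convergence and all these norms are uniformly equivalent to the $L^2$-norm through~\eqref{eq:en-density, lower and upper bound}, passing to the limit in the uniform bound yields $\norm{T(t)}_{\mathcal{L}(X)}\le M\e^{-\omega t}$.

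I expect the main obstacle to be this last passage to the limit rather than the sideways-energy estimate itself, which largely follows the smooth-case template once Lemma~\ref{lm:F db a.e.} is available. The operators $A_n$, their domains and the norms $\norm{\cdot}_{X_n}$ all vary with $n$, so the Trotter--Kato convergence must be set up carefully — presumably via the $w=\mathcal{H}x$ reformulation together with a direct ODE analysis of the resolvent problems $(\lambda-A_n)x_n=g$ — and the uniformity of the observability constant $\gamma$, which hinges precisely on the uniform variation bound of Lemma~\ref{lm:approx-en-density}(iii), must be tracked throughout.
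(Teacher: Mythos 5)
Your overall architecture --- generation via Lemma~\ref{lm:char-contr-sgr-gen-property}, a finite-time observability estimate proved by sideways energies for absolutely continuous densities with constants controlled only by $\ul{m}$, $\ol{m}$, $\norm{P_1^{-1}}$, $\norm{P_1^{-1}P_0}$ and the uniform bound on $\int_a^b\norm{\mathcal{H}_n'}$, then approximation via Lemma~\ref{lm:approx-en-density} and Trotter--Kato --- is essentially the paper's own proof. But there is one genuine gap, at the point where you assert that ``by the previous step the associated operators $A_n$ generate contraction semigroups \dots that are uniformly exponentially stable.'' To apply your AC-case result to $A_n := J\mathcal{H}_n$ (where $Jf := P_1\partial_\zeta f + P_0 f$ on $D(J) := \{f \in W^{1,2}((a,b),\K^m): Wf|_{\partial} = 0\}$), you must first verify for $A_n$ the two hypotheses you actually use: dissipativity (needed for generation of a contraction semigroup, hence for the classical solutions fed into Lemma~\ref{lm:F db a.e.} and for the contraction property in your observability argument), and the boundary dissipation inequality $\Re\scprd{x_n, A_n x_n}_{X_n} \le -\kappa|(\mathcal{H}_n x_n)(c)|^2$ with $\kappa$ independent of $n$, which your observability-to-decay conversion requires. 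Hypothesis~\eqref{eq:stab-thm, dissip-ass} is an assumption on $A$ only; nothing in your proposal transfers it to $A_n$, and a perturbation argument of the type ``$\mathcal{H}_n \to \mathcal{H}$ a.e., hence the inequalities converge'' would fail, because the boundary traces $(\mathcal{H}_n x_n)(c)$ are not controlled by a.e.\ convergence of the densities.

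The paper closes this gap with a short but essential structural observation (its first step): since $\mathcal{H}$, $\mathcal{H}_n$ are a.e.\ self-adjoint, $\Re\scprd{x,Ax}_X = \tfrac{1}{2}\Re\int_a^b (\mathcal{H}x)^*\, J(\mathcal{H}x)\d\zeta$, i.e.\ the left-hand side of~\eqref{eq:stab-thm, dissip-ass} is a function of $f := \mathcal{H}x \in D(J)$ alone, and moreover $\mathcal{H}_n D(A_n) = D(J) = \mathcal{H}D(A)$. Hence for $x_n \in D(A_n)$, setting $y_n := \mathcal{H}^{-1}\mathcal{H}_n x_n \in D(A)$ one gets $\Re\scprd{x_n, A_n x_n}_{X_n} = \Re\scprd{y_n, Ay_n}_X \le -\kappa|(\mathcal{H}y_n)(c)|^2 = -\kappa|(\mathcal{H}_n x_n)(c)|^2$ with the \emph{same} $\kappa$. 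Once this is inserted, your plan goes through. Incidentally, the final limit passage that you flagged as the main obstacle is much easier than you anticipate, by the same structural identity: for $x \in D(A)$ take $x_n := \mathcal{H}_n^{-1}\mathcal{H}x \in D(A_n)$; then $A_n x_n = J\mathcal{H}x = Ax$ exactly, and $x_n \to x$ by dominated convergence, which is all that the Trotter--Kato theorem needs --- no resolvent analysis or $w = \mathcal{H}x$ reformulation is required.
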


\begin{proof}
It immediately follows from the assumption~\eqref{eq:stab-thm, dissip-ass} by Lemma~\ref{lm:char-contr-sgr-gen-property} that $A$ generates a contraction semigroup on $X$ and so we have only to 
show that $\e^{A \cdot}$ is exponentially stable. We do so in various steps by means of a suitable approximation argument. We write 
\begin{align*}
Jf := P_1 \partial_{\zeta}f + P_0 f \qquad (f \in D(J)),
\end{align*}
where $P_1, P_0$ are the matrices defining $A$ and where
\begin{align*}
D(J) := \{ f \in W^{1,2}((a,b),\K^m): W f|_{\partial} = 0 \}.
\end{align*} 
In particular, we have $A = J \mathcal{H}$. We also choose energy densities $\mathcal{H}_n \in AC([a,b],\K^{m\times m})$ as in Lemma~\ref{lm:approx-en-density}, define
\begin{align*}
A_n := J \mathcal{H}_n,
\end{align*}
and endow $X_n := L^2([a,b],\K^m)$ with the $\mathcal{H}_n$-energy norm $\norm{\cdot}_{X_n}$. In particular, 
\begin{align*}
D(A_n) = \big\{ x \in X_n: \mathcal{H}_n x \in W^{1,2}((a,b),\K^m) \text{ and } W (\mathcal{H}_n x)|_{\partial} = 0 \big\}.
\end{align*}

As a first step, we show that $A_n$ is a contraction semigroup generator on $X_n$ for every $n \in \N$. 
Indeed, $A_n$ is a port-Hamiltonian operator with energy density $\mathcal{H}_n$ 
which has a domain of the form~\eqref{eq:domain-with-m-lin-bdry-cond} and is dissipative in $X_n$. In order to see the dissipativity, note that for every $x_n \in D(A_n) = D(J\mathcal{H}_n)$ one has $f_n := \mathcal{H}_n x_n \in D(J)$ and therefore
\begin{align*}
\mathcal{H}_n x_n = f_n = \mathcal{H}y_n
\end{align*}
for $y_n := \mathcal{H}^{-1} f_n \in D(J\mathcal{H}) = D(A)$. So, by the assumption~\eqref{eq:stab-thm, dissip-ass}, we have for $c = a$ or $c=b$ that 
\begin{align} \label{eq:stab-thm, dissip-cond for A_n}
\Re \scprd{x_n,A_n x_n}_{X_n} &= \Re \scprd{\mathcal{H}_n x_n, J \mathcal{H}_n x_n}_2 = \Re \scprd{\mathcal{H} y_n, J \mathcal{H} y_n}_2
= \Re \scprd{y_n, Ay_n}_X \notag \\
&\le -\kappa |(\mathcal{H}y_n)(c)|^2 = -\kappa |(\mathcal{H}_n x_n)(c)|^2 
\end{align}
for every $x_n \in D(A_n)$, which implies the claimed dissipativity of $A_n$ in $X_n$. In view of Lemma~\ref{lm:char-contr-sgr-gen-property} this 
concludes the proof of the first step.
\smallskip

As a second step, we show that there exist constants $\gamma_0, \kappa_0 \in (0,\infty)$ such that for every $n \in \N$ and $x_{n 0} \in D(A_n)$ one has the following sideways energy estimates:
\begin{align} \label{eq:stab-thm, step-2, sideways-energy-estimate}
F_{n \tau}^+(\zeta) \le F_{n \tau}^+(b)\,  \e^{\kappa_0 (b-a)}
\qquad \text{and} \qquad 
F_{n \tau}^-(\zeta) \le F_{n \tau}^-(a) \, \e^{\kappa_0 (b-a)}
\end{align}
for every $\zeta \in [a,b]$ and every $\tau > 2 \gamma_0 (b-a)$, where
\begin{align*}
&F_{n \tau}^+(\zeta) := \int_{\gamma_0(b-\zeta)}^{\tau - \gamma_0(b-\zeta)} \ul{x}_n(s)(\zeta)^* \mathcal{H}_n(\zeta) \ul{x}_n(s)(\zeta) \d s \\
&F_{n \tau}^-(\zeta) := \int_{\gamma_0(\zeta-a)}^{\tau - \gamma_0(\zeta-a)} \ul{x}_n(s)(\zeta)^* \mathcal{H}_n(\zeta) \ul{x}_n(s)(\zeta) \d s
\end{align*}
and where $\ul{x}_n(s)$ denotes the continuous representative of $x_n(s) := \e^{A_n \cdot} x_{n 0}$. 
We can argue similarly to~\cite{JaZw}, \cite{Au}, the essential difference being that in contrast to~\cite{JaZw}, \cite{Au} the derivative $\mathcal{H}_n'$ here need not be in $L^{\infty}$ but is  only in $L^1$. 
Set 
\begin{align} \label{eq:stab-thm, step-2, def gamma und kappa}
\gamma_0 := \norm{P_1^{-1}} / \ul{m}
\qquad \text{and} \qquad
\kappa_0 := \big( 2 \norm{P_1^{-1} P_0} \ol{m} + \ol{m}' \big) / \ul{m}
\end{align}
where $\ul{m}, \ol{m}$ are as in Lemma~\ref{lm:approx-en-density} and $\ol{m}' := \norm{\mathcal{H}(a)} + \operatorname{Var}(\mathcal{H}) + \norm{\mathcal{H}(b)}$. Also, choose and fix $n \in \N$ and $x_{n 0} \in D(A_n)$ and write $x_n := \e^{A_n \cdot} x_{n 0}$. Since $A_n$ is a port-Hamiltonian operator with energy density $\mathcal{H}_n \in AC([a,b],\K^{m\times m})$ and since $x_n = \e^{A_n \cdot} x_{n 0}$ is a classical solution of 
\begin{align*}
\dot{x} = A_n x,
\end{align*}
it follows by Lemma~\ref{lm:F db a.e.} that $F_{n \tau}^{\pm}$ for every $\tau > 2 \gamma_0 (b-a)$ is absolutely continuous and hence differentiable almost everywhere with derivative given by
\begin{align} \label{eq:stab-thm, step 2, abl F_tau}
(F_{n \tau}^{\pm})'(\zeta)
&= \ul{x}_n(s)(\zeta)^* \big( \pm \gamma_0 \mathcal{H}_n(\zeta) + P_1^{-1} \big) \ul{x}_n(s)(\zeta) \Big|_{s=t^{\pm}(\zeta)} \notag \\
&\quad + \ul{x}_n(s)(\zeta)^* \big( \pm \gamma_0 \mathcal{H}_n(\zeta) - P_1^{-1} \big) \ul{x}_n(s)(\zeta) \Big|_{s=r^{\pm}(\zeta)}  \\ 
&\quad - \int_{r^{\pm}(\zeta)}^{t^{\pm}(\zeta)} \ul{x}_n(s)(\zeta)^* \Big( (P_1^{-1} P_0 \mathcal{H}_n(\zeta))^* + \mathcal{H}_n'(\zeta) + P_1^{-1} P_0 \mathcal{H}_n(\zeta) \Big) \ul{x}_n(s)(\zeta) \d s \notag
\end{align} 
for a.e.~$\zeta \in [a,b]$, where $r^+(\zeta) := \gamma_0(b-\zeta)$, $t^+(\zeta) := \tau - \gamma_0(b-\zeta)$ and $r^-(\zeta) := \gamma_0(\zeta-a)$, $t^-(\zeta) := \tau - \gamma_0(\zeta-a)$.
In view of Lemma~\ref{lm:approx-en-density}~(ii) and of~(\ref{eq:stab-thm, step-2, def gamma und kappa}.a) it follows from~\eqref{eq:stab-thm, step 2, abl F_tau} that
\begin{align}
(F_{n \tau}^+)'(\zeta) &\ge - \kappa_n(\zeta) \int_{r^+(\zeta)}^{t^+(\zeta)} \ul{m} \, |\ul{x}_n(s)(\zeta)|^2 \d s 
\ge - \kappa_n(\zeta) F_{n \tau}^+(\zeta) 
\label{eq:stab-thm, step 2, diffungl fuer F_tau^+} \\
(F_{n \tau}^-)'(\zeta) &\le \kappa_n(\zeta) \int_{r^-(\zeta)}^{t^-(\zeta)} \ul{m} \, |\ul{x}_n(s)(\zeta)|^2 \d s 
\le \kappa_n(\zeta) F_{n \tau}^-(\zeta) 
\label{eq:stab-thm, step 2, diffungl fuer F_tau^-}
\end{align}
for all $\tau > 2 \gamma_0(b-a)$ and a.a.~$\zeta \in [a,b]$, where
\begin{align*}
\kappa_n(\zeta) := \big( 2\norm{P_1^{-1} P_0} \ol{m} + \norm{\mathcal{H}_n'(\zeta)} \big)/\ul{m}.
\end{align*}
Since $F_{n \tau}^{\pm}$ is absolutely continuous, the differential inequalities~\eqref{eq:stab-thm, step 2, diffungl fuer F_tau^+},  and~\eqref{eq:stab-thm, step 2, diffungl fuer F_tau^-} imply that $\zeta \mapsto F_{n \tau}^+(\zeta) \exp(-\int_{\zeta}^b \kappa_n(\eta) \d \eta)$ and $\zeta \mapsto F_{n \tau}^-(\zeta) \exp(-\int_a^{\zeta}\kappa_n(\eta) \d \eta)$ are monotonically increasing or decreasing, respectively. Consequently, 
\begin{align*}
&(F_{n \tau}^+)(\zeta) \le (F_{n \tau}^+)(b) \, \e^{\int_a^b \kappa_n(\eta) \d \eta} \le (F_{n \tau}^+)(b) \, \e^{\kappa_0 (b-a)} \\
&(F_{n \tau}^-)(\zeta) \le (F_{n \tau}^-)(a) \, \e^{\int_a^b \kappa_n(\eta) \d \eta} \le (F_{n \tau}^-)(a) \, \e^{\kappa_0 (b-a)}
\end{align*}
as desired, where for the second inequalities
Lemma~\ref{lm:approx-en-density}~(iii) has been used. 
\smallskip

As a third step, we show that there exist constants $C_0, t_0 \in (0,\infty)$ such that for every $n \in \N$ and $x_{n 0} \in D(A_n)$ one has the following estimate:
\begin{align} \label{eq:stab-thm, step 3}
\norm{x_n(\tau)}_{X_n}^2 \le C_0 \int_0^{\tau} |(\mathcal{H}_n x_n(s))(c)|^2 \d s
\end{align}
for every $\tau \ge t_0$ and for $c =a$ and $c=b$, where $x_n := \e^{A_n \cdot} x_{n 0}$. 
We can argue as in~\cite{JaZw}, \cite{Au} building on our second and  third step. 
Set
\begin{align}
t_0 := 2 \gamma_0 (b-a) + 1 \qquad \text{and} \qquad C_0 := \frac{\e^{\kappa_0 (b-a)}}{2 \ul{m}} \, (b-a).
\end{align} 
Also, choose and fix $n \in \N$ and $x_{n 0} \in D(A_n)$ and write $x_n := \e^{A_n \cdot} x_{n 0}$. Since $A_n$ generates a contraction semigroup on $X_n$ by the first step, we see for every $\tau \ge t_0$ that
\begin{align} \label{eq:stab-thm, step 3, absch 1}
\norm{x_n(\tau)}_{X_n}^2 
&\le (\tau-2\gamma_0(b-a)) \norm{x_n(\tau)}_{X_n}^2 = \int_{\gamma_0(b-a)}^{\tau-\gamma_0(b-a)} \norm{x_n(\tau)}_{X_n}^2 \d s \notag \\
&\le \int_{\gamma_0(b-a)}^{\tau-\gamma_0(b-a)} \norm{x_n(s)}_{X_n}^2 \d s \notag \\
&= \frac{1}{2} \int_a^b \int_{\gamma_0(b-a)}^{\tau-\gamma_0(b-a)} \ul{x}_n(s)(\zeta)^* \mathcal{H}_n(\zeta)   \ul{x}_n(s)(\zeta) \d s \d \zeta,
\end{align} 
where interchanging the integrals in the last equality is justified due to the continuity of $(s,\zeta) \mapsto  \ul{x}_n(s)(\zeta)$, see~\eqref{eq:F db, step 2, 1}. Increasing the inner integration interval in~\eqref{eq:stab-thm, step 3, absch 1} to $[  \gamma_0(b-\zeta),  \tau - \gamma_0(b-\zeta) ]$ or $[ \gamma_0(\zeta-a),  \tau - \gamma_0(\zeta-a) ]$ respectively and using the sideways energy estimates~\eqref{eq:stab-thm, step-2, sideways-energy-estimate} from the second step, we conclude that
\begin{align}
\norm{x_n(\tau)}_{X_n}^2  \le \frac{1}{2} \int_a^b F_{n \tau}^{\pm}(\zeta) \d \zeta
\le \frac{1}{2} \min \{  F_{n \tau}^{+}(b),  F_{n \tau}^{-}(a) \} \, \e^{\kappa_0 (b-a)} (b-a)  
\end{align}
for every $\tau \ge t_0$.  And from this, in turn, the desired estimate~\eqref{eq:stab-thm, step 3} immediately follows (using the definition of~$F_n^{\pm}$) both for $c = b$ and for $c = a$. 
\smallskip

As a fourth step, we show that there exist constants $M_0 \in [1,\infty)$ and $\omega_0 \in (-\infty,0)$ such that
\begin{align} \label{eq:stab-thm, step 4}
\norm{\e^{A_n t}}_{X_n, X_n} \le M_0 \, \e^{\omega_0 t}  
\end{align}
for all $t \in \R^+_0$ and $n \in \N$, where $\norm{\cdot}_{X_n,X_n}$ is the operator norm induced by $\norm{\cdot}_{X_n}$. 
Indeed, from the third step and~\eqref{eq:stab-thm, dissip-cond for A_n} it follows that for every $n \in \N$ and $x_{n 0} \in D(A_n)$ 
\begin{align*}
\norm{ \e^{A_n t_0}x_{n 0}}_{X_n}^2 
&\le C_0 \int_0^{t_0} |(\mathcal{H}_n x_n(s))(c)|^2 
\le - C_0/\kappa \int_0^{t_0} \Re \scprd{x_n(s), A_n x_n(s)}_{X_n} \d s \\
&= C_0/(2\kappa) \big( \norm{x_{n 0}}_{X_n}^2 - \norm{ \e^{A_n t_0}x_{n 0}}_{X_n}^2 \big),
\end{align*}
where as usual $x_n := \e^{A_n \cdot} x_{n 0}$. 
So, by the density of $D(A_n)$ in $X_n$, we obtain 
\begin{align*}
\norm{ \e^{A_n t_0} }_{X_n,X_n} \le \bigg( \frac{C_0/(2\kappa)}{1+C_0/(2\kappa)} \bigg)^{1/2} =: \mu_0
\end{align*}
for every $n \in \N$. And from this, in turn, we conclude by the semigroup and the contraction semigroup property of $\e^{A_n \cdot}$ that for arbitrary $t \in \R^+_0$ one has
\begin{align*}
\norm{ \e^{A_n t} }_{X_n,X_n} = \norm{ (\e^{A_n  t_0})^l \, \e^{A_n (t-lt_0) } }_{X_n,X_n} \le \mu_0^l = \frac{1}{\mu_0} \mu_0^{l+1} \le \frac{1}{\mu_0} \mu_0^{t/t_0},
\end{align*}
where we used the abbreviation $l := \lfloor t/t_0 \rfloor$ for the integer part of $t/t_0$ and the fact that $\mu_0 < 1$. Setting
\begin{align}
M_0 := \frac{1}{\mu_0} \in [1,\infty) \qquad \text{and} \qquad \omega_0 := (\log \mu_0)/t_0 \in (-\infty,0),
\end{align} 
we finally obtain the desired estimate~\eqref{eq:stab-thm, step 4}. 
\smallskip

As a fifth and last step, we can finally show that $\e^{A \cdot}$ is exponentially stable.
Indeed, since $\norm{\cdot}_{X_n}$ is equivalent to $\norm{\cdot}_X$ with equivalence constants independent of $n$ (Lemma~\ref{lm:approx-en-density}~(ii)!), it follows from the fourth step that there exists a constant $M \in [1,\infty)$ such that
\begin{align} \label{eq:stab-thm, step 5, 1}
\norm{\e^{A_n t}}_{X, X} \le M \, \e^{\omega_0 t}  
\end{align}
for all $t \in \R^+_0$ and $n \in \N$, where $\norm{\cdot}_{X,X}$ is the operator norm induced by $\norm{\cdot}_{X}$. 
Also, for every $x \in D(A)$ there exists a sequence $(x_n)$ with $x_n \in D(A_n)$ and
\begin{align} \label{eq:stab-thm, step 5, 2}
x_n \underset{X}{\longrightarrow} x \qquad \text{and} \qquad A_n x_n \underset{X}{\longrightarrow} A x
\end{align}
as $n \to \infty$. (Indeed, for $x \in D(A) = D(J\mathcal{H})$ one has $f := \mathcal{H}x \in D(J)$ and $x_n := \mathcal{H}_n^{-1} f \in D(J \mathcal{H}_n) = D(A_n)$ and $A_n x_n = Jf = Ax \longrightarrow Ax$ as well as $x_n = \mathcal{H}_n^{-1} f \longrightarrow \mathcal{H}^{-1} f = x$, where for the last convergence we used dominated convergence along with Lemma~\ref{lm:approx-en-density}~(i) and~(ii).)
Combining now~\eqref{eq:stab-thm, step 5, 1} and~\eqref{eq:stab-thm, step 5, 2}, we see by the theorem of Trotter and Kato (Theorem~III.4.8 of~\cite{EnNa}) that $\e^{A_n t} \longrightarrow \e^{A t}$ in the strong operator topology of $X$ as $n \to \infty$ for every $t \in \R^+_0$. So, by~\eqref{eq:stab-thm, step 5, 1},  
\begin{align}
\norm{\e^{A t}}_{X, X} \le M \, \e^{\omega_0 t}
\end{align} 
for every $t \in \R^+_0$, which in view of $\omega_0 < 0$ proves the asserted exponential stability. 
\end{proof}


With the above theorem at hand, we can now easily prove the following stabilization result. 
See the remarks after the corollary 
for a control-theoretic 
interpretation of this result and its assumptions.

\begin{cor} \label{cor:stabiliz-result}
Suppose $\mathcal{A}: D(\mathcal{A}) \subset X \to X$ is a first-order port-Hamiltonian operator with energy density $\mathcal{H} \in BV([a,b],\K^{m\times m})$, where $X := L^2([a,b],\K^m)$ is endowed with the $\mathcal{H}$-energy norm $\norm{\cdot}_X$ and where
\begin{align} \label{eq:D(A)-with-zero-bdry-cond}
D(\mathcal{A}) = \big\{ x \in X: \mathcal{H}x \in W^{1,2}((a,b),\K^m) \text{ and } W_{B,1}(\mathcal{H}x)|_{\partial} = 0 \big\}
\end{align}
for some matrix $W_{B;1} \in \K^{(m-k) \times 2m}$ and some $k \in \{1, \dots, m\}$. 
Suppose further that $\mathcal{B}, \mathcal{C}: D(\mathcal{A}) \subset X \to \K^k$ are linear boundary operators given by 
\begin{align} \label{eq:bdry-op-def}
\mathcal{B}x := W_{B,2}(\mathcal{H}x)|_{\partial} 
\qquad \text{and} \qquad
\mathcal{C}x := W_{C}(\mathcal{H}x)|_{\partial} 
\end{align}
with matrices $W_{B,2}, W_C \in \K^{k \times 2m}$ such that the following two conditions are satisfied:
\begin{itemize}
\item[(i)] $\Re\scprd{x,\mathcal{A}x}_X \le (\mathcal{B}x)^* \mathcal{C}x$ for all $x \in D(\mathcal{A})$
\item[(ii)] there exists a constant $\lambda \in (0,\infty)$ such that for $c = a$ or $c = b$ one has
\begin{align*} 
|\mathcal{B}x|^2 + |\mathcal{C}x|^2 \ge \lambda |(\mathcal{H}x)(c)|^2
\qquad (x \in D(\mathcal{A})).
\end{align*}
\end{itemize}
Then for every $\mu \in (0,\infty)$ the operator $A := \mathcal{A}|_{D(A)}$ with domain $D(A) := \{ x \in D(\mathcal{A}): \mathcal{B}x = -\mu \, \mathcal{C}x \}$
generates an exponentially stable contraction semigroup on $X$. 
\end{cor}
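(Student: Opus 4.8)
The plan is to derive the corollary directly from Theorem~\ref{thm:stab-thm} by verifying, for the closed-loop operator $A$, each of that theorem's hypotheses. The first step is to rewrite the feedback law: for $x \in D(\mathcal{A})$ the condition $\mathcal{B}x = -\mu\,\mathcal{C}x$ is equivalent to $(W_{B,2}+\mu W_C)(\mathcal{H}x)|_{\partial}=0$, and stacking this on top of the constraint $W_{B,1}(\mathcal{H}x)|_{\partial}=0$ built into $D(\mathcal{A})$ shows that
\[
D(A) = \big\{ x \in X: \mathcal{H}x \in W^{1,2}((a,b),\K^m) \text{ and } W(\mathcal{H}x)|_{\partial}=0 \big\}, \qquad W := \begin{pmatrix} W_{B,1} \\ W_{B,2}+\mu W_C \end{pmatrix}.
\]
Since $W_{B,1} \in \K^{(m-k)\times 2m}$ and $W_{B,2}+\mu W_C \in \K^{k\times 2m}$, the matrix $W$ lies in $\K^{m\times 2m}$, so $A$ carries the differential expression of $\mathcal{A}$ on a domain of exactly the form~\eqref{eq:stab-thm, domain-ass}.

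Because $A$ is defined only as a restriction of $\mathcal{A}$, I would next confirm that $A$ is a genuine first-order port-Hamiltonian operator, that is, that $D(A)$ is dense in $X$. This is immediate: any $x$ with $\mathcal{H}x \in C_c^{\infty}((a,b),\K^m)$ satisfies $(\mathcal{H}x)|_{\partial}=0$ and hence lies in $D(A)$; the collection of such $x$ is $\mathcal{H}^{-1}C_c^{\infty}((a,b),\K^m)$, which is dense in $X$ since $\mathcal{H}^{-1}$ is a bounded and boundedly invertible multiplication operator by~\eqref{eq:en-density, lower and upper bound} and $C_c^{\infty}$ is dense in $L^2$.

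The substantive step is the strict dissipativity estimate~\eqref{eq:stab-thm, dissip-ass}. For $x \in D(A)$ one has $Ax = \mathcal{A}x$ and $\mathcal{B}x = -\mu\,\mathcal{C}x$, so the passivity assumption~(i) gives
\[
\Re\scprd{x,Ax}_X \le (\mathcal{B}x)^*\mathcal{C}x = -\mu\,|\mathcal{C}x|^2,
\]
while feeding $|\mathcal{B}x|^2 = \mu^2|\mathcal{C}x|^2$ into the domination assumption~(ii) yields $(1+\mu^2)|\mathcal{C}x|^2 \ge \lambda\,|(\mathcal{H}x)(c)|^2$ at the distinguished endpoint $c\in\{a,b\}$. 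Combining the two displays gives
\[
\Re\scprd{x,Ax}_X \le -\frac{\mu\lambda}{1+\mu^2}\,|(\mathcal{H}x)(c)|^2 \qquad (x \in D(A)),
\]
which is exactly~\eqref{eq:stab-thm, dissip-ass} with $\kappa := \mu\lambda/(1+\mu^2) \in (0,\infty)$. Theorem~\ref{thm:stab-thm} then delivers the claimed exponentially stable contraction semigroup.

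I do not expect a genuinely hard obstacle here; the work lies in organizing the reduction to Theorem~\ref{thm:stab-thm}. The two points that need care are (a) recognizing that the $m-k$ open-loop constraints and the $k$ feedback constraints assemble into a single full set of $m$ boundary conditions of the required shape, and (b) seeing that the strict positivity $\mu>0$ of the gain is precisely what upgrades the passive inequality~(i) to a dissipative one, with condition~(ii) then trading the output penalty $|\mathcal{C}x|^2$ for the boundary penalty $|(\mathcal{H}x)(c)|^2$ that feeds the sideways-energy machinery of Theorem~\ref{thm:stab-thm}.
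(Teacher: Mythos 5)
Your proposal is correct and follows essentially the same route as the paper: stack the feedback condition $(W_{B,2}+\mu W_C)(\mathcal{H}x)|_{\partial}=0$ with $W_{B,1}$ into the single matrix $W\in\K^{m\times 2m}$, deduce the strict dissipativity estimate~\eqref{eq:stab-thm, dissip-ass} from conditions~(i) and~(ii) via the feedback law, and invoke Theorem~\ref{thm:stab-thm}. The only differences are cosmetic: your explicit density check of $D(A)$ (which the paper leaves implicit in calling $A$ a port-Hamiltonian operator) and your constant $\kappa=\mu\lambda/(1+\mu^2)$ in place of the paper's $\kappa=\lambda\min\{1/(2\mu),\mu/2\}$, both of which serve the same purpose.
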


\begin{proof}
Choose and fix $\mu \in (0,\infty)$ and define the matrix
\begin{align*}
W := 
\begin{pmatrix}
W_{B,1} \\ W_{B,2}+\mu W_C
\end{pmatrix}
\in 
\K^{m\times 2m}.
\end{align*}
Then the domain of $A$ is of the form~\eqref{eq:stab-thm, domain-ass}
and there exists a constant $\kappa \in (0,\infty)$ such that for $c=a$ or $c=b$ the estimate~\eqref{eq:stab-thm, dissip-ass} holds true. Indeed, setting
\begin{align*}
\kappa := \lambda \min\Big\{ \frac{1}{2\mu}, \frac{\mu}{2} \Big\},
\end{align*}
we conclude from our assumptions~(i) and~(ii) that for every $x \in D(A)$
\begin{align*}
\Re \scprd{x,Ax}_X \le \frac{1}{2} (\mathcal{B}x)^* \mathcal{C}x +  \frac{1}{2} (\mathcal{B}x)^* \mathcal{C}x
= -\frac{1}{2\mu} |\mathcal{B}x|^2 - \frac{\mu}{2} |\mathcal{C}x|^2
\le -\kappa |(\mathcal{H}x)(c)|^2. 
\end{align*}
So, the assertion of the corollary follows from the previous theorem.
\end{proof}

In control-theoretic terms, the above corollary says that the linear input-output system
\begin{gather}
\dot{x} = \mathcal{A}x \label{eq:evol-eq}\\
u(t) = \mathcal{B}x(t) \qquad \text{and} \qquad y(t) = \mathcal{C}x(t) \label{eq:input/output-cond}
\end{gather}
with 
control input $u$ and observation output $y$ is exponentially stabilized by means of the negative output-feedback law
\begin{align} \label{eq:output-feedback}
u(t) = -\mu y(t)
\end{align}  
with an arbitrary amplification factor $\mu > 0$. 
%
Condition~(i) of the above corollary means that the input-output system~\eqref{eq:evol-eq}, \eqref{eq:input/output-cond} is impedance-passive in the sense of~\cite{TuWe14}, \cite{Au}. 
Condition~(ii), in turn, means 
that the control input and observation output dominate the value of the state at one of the boundary points ($a$ or $b$). 
Also, if one slightly sharpens the assumptions of the above corollary -- namely by additionally requiring that $\mathcal{H} \in AC([a,b],\K^{m\times m})$ and that $\Re\scprd{x,\mathcal{A}x}_X = (\mathcal{B}x)^* \mathcal{C}x$ for all $x \in D(\mathcal{A})$ (impedance-energy-preservation) -- then the system~\eqref{eq:evol-eq}, \eqref{eq:input/output-cond} is classically approximately observable in infinite time in the sense of~\cite{ScZw18} (Condition~4.9). In fact, this can be proven in exactly the same way as Lemma~4.16 of~\cite{ScZw18}.

\section{Some applications}

In this section, we apply our stabilization result to a vibrating string and a Timoshenko beam.

\begin{ex}
Consider a vibrating string~\cite{Vi}, \cite{JaZw}, \cite{Au}, that is, the transverse displacement $w(t,\zeta)$ of the string at 
position $\zeta \in [a,b]$ evolves according to the partial differential equation
\begin{align} \label{eq:string pde}
\rho(\zeta) \partial_t^2 w(t,\zeta) =  \partial_{\zeta} \big( T(\zeta) \partial_{\zeta}w(t,\zeta) \big)
 \qquad (t \in [0,\infty), \zeta \in [a,b])
\end{align}
(vibrating string equation) and the energy $E_w(t)$ of the string at time $t$ is given by 
\begin{align*} 
E_w(t) = \frac{1}{2} \int_a^b \rho(\zeta) \big( \partial_t w(t,\zeta) \big)^2 + T(\zeta) \big( \partial_{\zeta} w(t,\zeta) \big)^2 \d \zeta.
\end{align*}
In these equations, $\rho$, $T$ are the mass density and the Young modulus of elasticity of the string and they are assumed to belong to $BV([a,b],\R)$ and to be bounded below and above by positive finite constants. 
Also, assume that the string is clamped at its left end, that is, 
\begin{align} \label{eq:string bdry cond}
\partial_t w(t,a) = 0 \qquad (t \in [0,\infty))
\end{align}  
and that the control input $u(t)$ and observation output $y(t)$ are given respectively by the force and by the velocity at the right end of the string, that is,
\begin{align} \label{eq:string input/output}
u(t) = T(b) \partial_{\zeta} w(t,b)
\qquad \text{and} \qquad
y(t) = \partial_t w(t,b)
\end{align}
for all $t \in [0,\infty)$. With the choices
\begin{align*}
x(t)(\zeta) 
:=
\begin{pmatrix}
\rho(\zeta) \partial_t w(t,\zeta) \\ \partial_{\zeta} w(t,\zeta)
\end{pmatrix},
\qquad 
\mathcal{H}(\zeta) 
:=
\begin{pmatrix}
1/\rho(\zeta) & 0 \\ 0 & T(\zeta)
\end{pmatrix},
\qquad
P_1 := \begin{pmatrix} 0 & 1 \\ 1 & 0 \end{pmatrix}
\end{align*}
and $P_0 := 0 \in \R^{2\times 2}$, the pde~\eqref{eq:string pde} with the boundary condition~\eqref{eq:string bdry cond} takes the form~\eqref{eq:evol-eq}  of a first-order port-Hamiltonian system with~\eqref{eq:D(A)-with-zero-bdry-cond} and with $W_{B,1} \in \R^{1\times 4}$ and, moreover, the in- and output conditions~\eqref{eq:string input/output} take the desired form~\eqref{eq:input/output-cond} with~\eqref{eq:bdry-op-def} and with matrices $W_{B,2}, W_C \in \R^{1\times 4}$. 
It is straightforward to verify that $\mathcal{H}$ is an energy density with $\mathcal{H} \in BV([a,b],\R^{2\times 2})$ 
and that condition~(i) (even impedance-energy-preservation) and condition~(ii) of Corollary~\ref{cor:stabiliz-result} are satisfied. 
%
So, by that corollary, the input-output system~\eqref{eq:string pde}, \eqref{eq:string bdry cond}, \eqref{eq:string input/output} is exponentially stabilized by means of the negative output-feedback law~\eqref{eq:output-feedback} with arbitrary $\mu >0$. 
In the special case of constant Young modulus $T \equiv 1$ and amplification factor $\mu = 1$, the present example reduces to a stabilty result from~\cite{CoZu95} (Theorem~10.1).~$\blacktriangleleft$
\end{ex}


\begin{ex}
Consider a beam modelled according to Timoshenko~\cite{Vi}, \cite{JaZw}, \cite{Au}, that is, the transverse displacement $w(t,\zeta)$ and the rotation angle $\phi(t,\zeta)$ of the beam at 
position $\zeta \in [a,b]$ evolve according to the partial differential equations
\begin{gather} 
\rho(\zeta) \partial_t^2 w(t,\zeta) = \partial_{\zeta} \Big( K(\zeta) \big( \partial_{\zeta}w(t,\zeta) - \phi(t,\zeta) \big) \Big) 
\label{eq:Timoshenko pde, 1}\\
I_r(\zeta) \partial_t^2 \phi(t,\zeta) = \partial_{\zeta} \big( E I(\zeta) \partial_{\zeta} \phi(t,\zeta) \big) + K(\zeta) \big( \partial_{\zeta}w(t,\zeta) - \phi(t,\zeta) \big)
\label{eq:Timoshenko pde, 2}
\end{gather}
for $t \in [0,\infty), \zeta \in [a,b]$ (Timoshenko beam equations) and the energy $E_{w,\phi}(t)$ of the beam at time $t$ is given by 
\begin{align*} 
E_{w,\phi}(t) &= \frac{1}{2} \int_a^b \rho(\zeta) \big( \partial_t w(t,\zeta) \big)^2 + K(\zeta)  \big( \partial_{\zeta}w(t,\zeta) - \phi(t,\zeta) \big)^2 \\
&\qquad \qquad + I_r(\zeta) \big( \partial_t \phi(t,\zeta) \big)^2 + E I(\zeta) \big( \partial_{\zeta} \phi(t,\zeta)\big)^2  \d \zeta.
\end{align*}
In these equations, $\rho$, $E$, $I$, $I_r$, $K$ are respectively the mass density, the Young modulus, the moment of inertia, the rotatory moment of inertia, and the shear modulus of the beam and they are assumed to belong to $BV([a,b],\R)$ and to be bounded below and above by positive finite constants. Also, assume that the beam is clamped at its left end, that is, 
\begin{align} \label{eq:Timoshenko bdry cond}
\partial_t w(t,a) = 0 \qquad \text{and} \qquad \partial_t \phi(t,a) = 0
\qquad (t \in [0,\infty))
\end{align}  
(velocity and angular velocity at the left endpoint $a$ are zero), and that the control input $u(t)$ is given by the force and the torsional moment at  the right end of the beam and the observation output $y(t)$ is given by the velocity and angular velocity at the right end of the beam, that is,
\begin{align} \label{eq:Timoshenko input/output}
u(t) = \begin{pmatrix} K(b) \big( \partial_{\zeta} w(t,b) - \phi(t,b) \big) \\ E I(b) \partial_{\zeta} \phi(t,b) \end{pmatrix},
\qquad
y(t) = \begin{pmatrix} \partial_t w(t,b) \\ \partial_t \phi(t,b) \end{pmatrix}
\end{align}
for all $t \in [0,\infty)$. With the choices
\begin{align*}
x(t)(\zeta) 
:=
\begin{pmatrix}
\partial_{\zeta} w(t,\zeta) - \phi(t,\zeta) \\
\rho(\zeta) \partial_t w(t,\zeta) \\
\partial_{\zeta} \phi(t,\zeta) \\
I_r(\zeta) \partial_t \phi(t,\zeta)
\end{pmatrix},
\qquad 
\mathcal{H}(\zeta) 
:=
\begin{pmatrix}
K(\zeta) & 0 & 0 & 0 \\
0 & 1/\rho(\zeta) & 0 & 0 \\ 
0 & 0 & EI(\zeta) & 0 \\
0 & 0 & 0 & 1/I_r(\zeta)
\end{pmatrix},
\end{align*}
and an appropriate choice of $P_1, P_0 \in \R^{4\times 4}$, the pde~\eqref{eq:Timoshenko pde, 1}, \eqref{eq:Timoshenko pde, 2} with the boundary conditions~\eqref{eq:Timoshenko bdry cond} take the form~\eqref{eq:evol-eq} of a first-order port-Hamiltonian system with~\eqref{eq:D(A)-with-zero-bdry-cond} and with $W_{B,1} \in \R^{2\times 8}$ and, moreover, the in- and output conditions~\eqref{eq:Timoshenko input/output} take the desired form~\eqref{eq:input/output-cond} with~\eqref{eq:bdry-op-def} and with matrices $W_{B,2}, W_C \in \R^{2\times 8}$. 
It is straightforward to verify that $\mathcal{H}$ is an energy density with $\mathcal{H} \in BV([a,b],\R^{4\times 4})$ 
and that condition~(i) (even impedance-energy-preservation) and condition~(ii) of Corollary~\ref{cor:stabiliz-result} are satisfied.
%
So, by that corollary, the input-output system~\eqref{eq:Timoshenko pde, 1}, \eqref{eq:Timoshenko pde, 2}, \eqref{eq:Timoshenko bdry cond}, \eqref{eq:Timoshenko input/output} is exponentially stabilized by means of the negative output-feedback law~\eqref{eq:output-feedback}. $\blacktriangleleft$
\end{ex}

\section*{Acknowledgements}

I would like to thank the German Research Foundation (DFG) for  financial support through the grant ``Input-to-state stability and stabilization of distributed-parameter systems'' (DA 767/7-1).

\begin{small}

\end{small}

\end{document}